\newtheorem{theorem}{Theorem}[section]
\newtheorem{lemma}[theorem]{Lemma}
\newtheorem{corollary}[theorem]{Corollary}
\theoremstyle{definition}
\newtheorem*{remark}{Remark}
\newcommand{\ie}{i.\,e.,~}
\newcommand{\eg}{e.\,g.,~}
\renewcommand{\vec}{\mathbf}
\renewcommand{\epsilon}{\varepsilon}
\newcommand{\R}{\mathbb{R}}
\newcommand{\x}{\vec{x}}
\renewcommand{\u}{\vec{u}}
\newcommand{\enum}[1]{{[{#1}]}}
\newcommand{\dint}[1]{\,d{#1}}
\newcommand{\dx}{\dint{\x}}
\newcommand{\dy}{\dint{\y}}
\newcommand{\dz}{\dint{\z}}
\newcommand{\del}{\partial}
\newcommand{\grad}{\nabla}
\newcommand{\tr}{^\top}
\newcommand{\inv}{^{-1}}
\newcommand{\defas}{\coloneqq}
\newcommand{\asdef}{\eqqcolon}
\newcommand{\Prob}[1]{\mathbb{P}_{#1}}
\newcommand{\expct}[2]{\mathbb{E}_{#1}\left[#2\right]}
\newcommand{\prob}[2]{\mathbb{P}_{#1}\left(#2\right)}
\newcommand{\probthree}[3]{\mathbb{P}_{#1}^{#2}\left(#3\right)}
\newcommand{\var}[2]{\mathbb{V}\textnormal{ar}_{#1}\left({#2}\right)}
\newcommand{\std}[2]{\mathbb{S}\textnormal{td}_{#1}\left({#2}\right)}
\newcommand{\cv}[2]{\mathbb{C}\textnormal{V}_{#1}\left({#2}\right)}
\newcommand{\abs}[1]{\left\lvert{#1}\right\rvert}
\newcommand{\norm}[1]{\lVert{#1}\rVert}
\newcommand{\I}{\vec{I}}
\newcommand{\leb}{\bm{\lambda}}
\newcommand{\dhell}[2]{d_{\textnormal H}\left({#1},{#2}\right)}
\newcommand{\borel}[1]{\mathcal{B}({#1})}
\newcommand{\set}[2]{\left\lbrace{#1}\;\lvert\;{#2}\right\rbrace}
\newcommand{\diag}[1]{\textnormal{diag}{\left(#1\right)}}
\newcommand{\trace}[1]{\textnormal{tr}(#1)}
\newcommand{\diam}[1]{\textnormal{diam}({#1})}
\newcommand{\MSE}{\textnormal{MSE}}
\newcommand{\prior}{\rho_{\textnormal{prior}}}
\newcommand{\posterior}{\rho_{\textnormal{post}}}
\newcommand{\like}{\rho_{\textnormal{like}}}
\newcommand{\data}{{\vec{d}}}
\newcommand{\model}{\mathcal{G}}
\newcommand{\cov}{\Gamma}
\newcommand{\C}{\vec{C}}
\newcommand{\W}{\vec{W}}
\newcommand{\U}{\vec{U}}
\newcommand{\A}{\vec{A}}
\newcommand{\MeigVals}{\vec{\Lambda}}
\newcommand{\eigVal}{\lambda}
\newcommand{\eigVec}{\vec{w}}
\newcommand{\X}{{\vec{X}}}
\newcommand{\y}{{\vec{y}}}
\newcommand{\Y}{{\vec{Y}}}
\newcommand{\z}{{\vec{z}}}
\newcommand{\Z}{{\vec{Z}}}
\newcommand{\Wpert}{{\hat{\W}}}
\newcommand{\ypert}{{\hat{\y}}}
\newcommand{\Ypert}{{\hat{\Y}}}
\newcommand{\zpert}{{\hat{\z}}}
\newcommand{\Zpert}{{\hat{\Z}}}
\newcommand{\xyzW}[3]{{\llbracket {#1},{#2} \rrbracket_{#3}}}
\newcommand{\xyz}{\xyzW{\y}{\z}{}}
\newcommand{\posteriorgpert}{\rho_{\textnormal{post},\hat{g}}}
\newcommand{\posteriorgNpert}{\rho_{\textnormal{post},\hat{g}_N}}
\newcommand{\Xset}{\mathcal{X}}
\newcommand{\Yset}{\mathcal{Y}}
\newcommand{\Zset}{\mathcal{Z}}
\newcommand{\px}{\rho_{\X}}
\newcommand{\py}{\rho_{\Y}}
\newcommand{\pz}{\rho_{\Z}}
\newcommand{\pzy}{\rho_{\Z|\Y}}
\newcommand{\pyz}{\rho_{\Y,\Z}}
\newcommand{\pypert}{\rho_{\hat{\Y}}}
\newcommand{\pzypert}{\rho_{\hat{\Z}|\hat{\Y}}}
\renewcommand{\email}[2][]{%
	\ifx\emails\@empty\relax\else{\g@addto@macro\emails{,\space}}\fi%
	\@ifnotempty{#1}{\g@addto@macro\emails{\textrm{(#1)}\space}}%
	\g@addto@macro\emails{#2}%
}
\def\blfootnote{\gdef\@thefnmark{}\@footnotetext}
\title[A probabilistic framework for approximating functions in active subspaces]{A probabilistic framework for approximating functions\\in active subspaces}
\author[Mario Teixeira Parente]{Mario Teixeira Parente$^*$}
\address{$^*$Chair for Numerical Mathematics, Technical University Munich (TUM), Germany}
\email{parente@ma.tum.de}
\date{\today}
\keywords{dimension reduction, ridge approximation, conditional probability measure, Monte Carlo approximation}
\subjclass[2010]{65C60}
\numberwithin{equation}{section}
\begin{document}
\begin{abstract}
This paper develops a comprehensive probabilistic setup to compute approximating functions in active subspaces.
Constantine et\,al. proposed the \textit{active subspace method} in \cite{constantine2014active} to reduce the dimension of computational problems.
This method can be seen as an attempt to approximate a high-dimensional function of interest $f$ by a low-dimensional one.
A common approach for this is to integrate $f$ over the inactive, \ie non-dominant, directions with a suitable conditional density function.
In practice, this can be done using a finite Monte Carlo sum, making not only the resulting approximation random in the inactive variable, but also its expectation w.r.t. the active variable, \ie the integral of the low-dimensional function weighted with a probability measure on the active variable.
In this regard, we develop a fully probabilistic framework extending results from \cite{constantine2014active,constantine2016accelerating}.
The results are supported by a simple numerical example.
\end{abstract}
\maketitle
\blfootnote{\textbf{Funding:} This work was funded by the International Graduate School for Science and Engineering (IGSSE) of TUM.}
\setcounter{tocdepth}{1}
\tableofcontents
\section{Introduction}
The term \textit{active subspaces} refers to a recently emerging set of tools for dimension reduction \cite{constantine2014active}.
Reducing dimensions is one natural approach used in simplifying computational problems suffering from the \textit{curse of dimensionality}, a phenomenon that results in an exponential growth in computational costs with increasing dimensions.
What is regarded as a high dimension is dictated by the actual problem considered.
By "high", we mean a number of dimensions that lead to excessive computational times, the need of large memory, or even questions of feasibility of the computation.

There exist different approaches besides active subspaces in the reduction of computational effort, especially within the context of \textit{Uncertainty Quantification} (UQ) and \textit{Bayesian inversion} \cite{stuart2010inverse}.
For example, in \cite{buithanh2013computational,flath2011fast,spantini2015optimal} low-rank approximations for the \textit{prior-preconditioned Hessian} of the data misfit function were considered for approximating the posterior covariance in computationally intensive linear Bayesian inverse problems.
An extension for the nonlinear setting was proposed in \cite{martin2012stochastic}.
A drawback of these methods is that they still require work in the full, high-dimensional space.
A promising approach of dimension reduction was proposed in \cite{cui2014likelihood}, where dominant directions in the parameter space were sought to drive the update from the prior to the posterior distribution.
These directions span the so-called \textit{likelihood-informed subspace} (LIS) and are computed using the posterior expectation of the prior-preconditioned Hessian.
A study in \cite{zahm2018certified} develops a new methodology that constructs a \textit{controlled} approximation of the data misfit by a so-called \textit{profile function} composed with a low-rank projector such that an upper bound on the KL divergence between the posterior and the corresponding approximation falls under a given threshold.
These functions are similar to ridge functions \cite{pinkus2015ridge} and vary only on a low-dimensional subspace.
The upper bound is obtained via \textit{(subspace) logarithmic Sobolev inequalities} \cite{gross1975logarithmic} which is then easier accessible.
The fact that it controls the KL divergence makes it valuable since this quantity is often not available.
However, logarithmic Sobolev inequalities have rather strong assumptions excluding priors with compact support or heavy tails.
The paper also contains a comparison to likelihood-informed and active subspaces.
Most of the methods introduced work only for scalar-valued functions.
In \cite{zahm2018gradient}, a gradient-based dimension reduction was performed for functions in Hilbert spaces, \ie also for vector-valued functions.

Active subspaces also aim to find a ridge approximation for a function of interest $f$.
However, it exploits the structure of the function's gradient, more precisely, the (prior-)averaged outer product of the gradient with itself.
The technique was already successfully applied for a wide range of complex problems of engineering or economical relevance, \eg in hydrology \cite{jefferson2015}, for a lithium ion battery model \cite{constantine2017time}, or to an elastoplasticity model for methane hydrates \cite{teixeiraparente2018efficient}.

Independent of the concrete methodology, each approach on dimension reduction aims at unfolding the main and dominant information hidden in a low-dimensional structure.
Active subspaces concentrate on directions in a computational subdomain in which $f$ is more sensitive, on average, than in other (orthogonal) directions.
For that, the eigenvalues and corresponding eigenvectors of an uncentered covariance-type matrix defined by the average of the outer product of the gradient $\grad f$ with itself are studied.
The span of eigenvectors with corresponding large eigenvalues form the so-called \textit{active subspace}.
With the active subspace at hand, $f$ can be approximated by a low-dimensional ridge function depending on fewer variables.

A common approximation for $f$ uses a conditional expectation of $f$ over the complement of the active subspace, the \textit{inactive subspace}, conditioned on the \textit{active variable}, which is a linear combination of the variables from the original domain \cite{constantine2014active}.
In practice, the conditional expectation is often approximated using a finite Monte Carlo sum.
For this type of approximation, only a few samples are generally necessary since the function $f$ is, by construction, only mildly varying on the inactive subspace.

The idea of active subspaces was introduced in \cite{constantine2014active} and exploited for an accelerated Markov chain Monte Carlo algorithm \cite{brooks2011handbook} in \cite{constantine2016accelerating}.
Theoretical considerations therein ignore stochasticity in the inactive subspace.
This paper discerns this aspect and thus, performs a complete and rigorous analysis of approximating functions in active subspaces.
Eventually, we aim at providing a comprehensive probabilistic framework that generalizes the existing theoretical setting from \cite{constantine2014active,constantine2016accelerating}.
The findings are supported by a simple test example.

The manuscript is structured as follows. Section~\ref{sec:prob_form} formulates the mentioned problem generally without the notion of an active subspace.
Section~\ref{sec:as} explains and derives the concept of an active subspace in detail, and sets up a probabilistic setting for treating randomness in the inactive subspace.
Section~\ref{sec:approx} discusses the main results on approximating functions in active subspaces via a Monte Carlo approximation of a conditional expectation (see \eg Theorem~\ref{thm:var_mc} and Theorem~\ref{thm:var_mc_pert}).
In addition, this section presents a simple numerical example that verifies the theoretical results.
Section~\ref{sec:bayinv_as} restates and extends a result from \cite{constantine2016accelerating}.
Finally, Section~\ref{sec:summary} includes a summary and a collection of concluding comments.

\section{Problem formulation}
\label{sec:prob_form}
Suppose two random variables $\Y$ and $\Z$ follow a joint distribution with joint density $\pyz$.
Also, assume that the corresponding marginal and conditional densities are defined in the usual way \cite[Section~20~and~33]{billingsley1995probability}.
Let us define
\begin{equation}
	g(\y) \defas \int{f(\y,\z)\,\pzy(\z|\y)\dz}
\end{equation}
for a function $f$ which is integrable w.r.t. $\pzy(\cdot\,|\,\y)$ in its second argument for every $\y$.
We can approximate $g$ by a finite Monte Carlo sum
\begin{equation}
	g_N(\y) \defas \frac{1}{N}\sum_{j=1}^{N}f(\y,\Z^\y_j), \quad \Z^\y_j\sim\Prob{\Z|\Y}^\y,
\end{equation}
where $d\Prob{\Z|\Y}^\y/d\leb = \pzy(\cdot\,|\,\y)$ and $N>0$ denotes the number of Monte Carlo samples used for each $\y$.
This means that $g_N$ is a random variable for every $\y$.
Now, assume that a high-dimensional variable $\x$ is divided into $\y$ and $\z$, i.\,e. $\x \mapsto (\y,\,\z)$.
For convenience, let us construct a function
\begin{equation}
	f_{g_N}(\x) \defas g_N(\y)
\end{equation}
defined on the corresponding high-dimensional domain.

The first main point of this manuscript is to give a rigorous description, in the context of active subspaces, of why the expression
\begin{equation}
	\label{eq:expct_fgn}
	\expct{}{f_{g_N}(\X)} = \int{f_{g_N}(\x)\,\px(\x)\dx},
\end{equation}
where $\px(\x)=\pyz(\y,\z)$, is in general \textit{random}.
The second point deals with the consequences (of treating this expression as non-deterministic) that lie in expanding the results from \cite{constantine2014active,constantine2016accelerating} in the given probabilistic framework.

\section{Active subspaces}
\label{sec:as}
Active subspaces, introduced in \cite{constantine2014computing,constantine2014active,constantine2016accelerating}, try to find a ridge approximation \cite{constantine2016many,pinkus2015ridge} to a measurable function $f:\Xset\to\R$, $\Xset\subseteq\R^n$ open, \ie $f(\x) \approx g(\mathbf{A}\tr\x)$ for all $\x\in\Xset$ by a measurable function $g:\Yset\to\R$, $\Yset\subseteq\R^k$ and a matrix $\mathbf{A}\in\R^{n\times k}$.
Obviously, it is hoped that $k \ll n$ to sufficiently reduce the dimension.
$\mathbf{A}$ is computed to hold the directions in which $f$ is more sensitive, on average, than in other directions.
This means that $f$ is nearly insensitive, on average, for directions in the null space of $\A\tr$ since $f(\x+\vec{w})\approx g(\A\tr(\x+\vec{w})) = g(\A\tr\x)\approx f(\x)$ for each $\vec{w}\in\mathcal{N}(\A\tr)\defas\set{\vec{v}\in\R^n}{\A\tr\vec{v}=\vec{0}}$.
The notion "on average" is crucial in the succeeding statements and means that sensitivities are weighted with a probability density function $\px$ defined on $\R^n$.

Now, let $\Xset$ denote the set of all $\x$'s with a positive density value, i.\,e.
\begin{equation}
	\Xset \defas \left\lbrace \x\in\R^n\;\lvert\;\px(\x)>0 \right\rbrace.
\end{equation}
We assume that $\Xset$ is open and hence $\Xset\in\borel{\R^n}$.
Thus, $\px$ is assumed to be zero on the boundary $\del\Xset$.
Also, suppose that $\Xset$ is a \textit{continuity set}, \ie $\leb^n(\del\Xset)=0$.
We will often make use of the fact that it is enough to integrate over $\Xset$ instead of $\R^n$ when weighting with $\px$.
In order to find the matrix $\mathbf{A}$ for the ridge approximation, we assume that $f$ has partial derivatives that are square integrable w.r.t. $\px$.
Additionally, we assume that $\px$ is \textit{bounded} and \textit{continuous} on $\Xset$.

To study sensitivities, we regard an orthogonal eigendecomposition of the averaged outer product of the gradient $\grad f : \Xset\to\R^n$ with itself, i.\,e.
\begin{equation}
	\label{eq:C}
	\C \defas \int_{\Xset}{\grad f(\x) \grad f(\x)\tr \px(\x) \dx} = \W\bm{\Lambda}\W\tr,
\end{equation}
where $\bm{\Lambda} = \text{diag}(\eigVal_1,\ldots,\eigVal_n)$ denotes the eigenvalue matrix with descending eigenvalues and $\W=\left[\eigVec_1 \cdots \eigVec_n\right]$ consists of all corresponding normed eigenvectors.
The fact that $\C$ is real symmetric implies that the eigenvectors $\eigVec_i$ can be chosen to give an \textit{orthonormal basis} (ONB) of $\R^n$.
Since $\C$ is additionally positive semi-definite, it holds that $\eigVal_i\geq0$, $i\in\enum{n}\defas\{1,\dots,n\}$.
Note that the eigenvalues
\begin{equation}
	\eigVal_i = \eigVec_i\tr\C\eigVec_i = \int_{\Xset}{\left(\eigVec_i\tr\grad f(\x)\right)^2\px(\x)\dx}, \quad i\in\enum{n},
\end{equation}
reflect the averaged sensitivities of $f$ in the direction of the corresponding eigenvectors.
That means that $f$ changes little, on average, in the directions of eigenvectors with small eigenvalues.

If it is possible to find a sufficiently large spectral gap, we can accordingly split $\W=\left[\W_1\;\;\W_2\right]$.
That is, $\W_1\in\R^{n\times k}$, $k\in\enum{n-1}$, holds the directions for which $f$ is more sensitive, on average, than for directions in $\W_2\in\R^{n\times(n-k)}$.
Dimension $k$ denotes the number of eigenvalues \textit{before} the spectral gap.
The size of the gap is crucial for the approximation quality of the active subspace \cite{constantine2014computing}.
After splitting $\W$, we can get a new parametrization of $\x$ such that
\begin{equation}
	\x = \W\W\tr\x = \W_1\W_1\tr\x +\W_2\W_2\tr\x = \W_1\y+\W_2\z,
\end{equation}
with $\y\defas\W_1\tr\x$, $\z\defas\W_2\tr\x$.
The variable $\y$ is called the \textit{active variable} and the column space of $\W_1$, $\mathcal{R}(\W_1) \defas \left\lbrace \W_1\y \;|\; \y\in\R^k \right\rbrace$, the \textit{active subspace}.

\textbf{Notation}
Throughout the remainder, we use some notation to avoid uninformative text.
From the previous lines, we can define $\xyz\defas\xyzW{\y}{\z}{\W}\defas\W_1\y+\W_2\z$ to shorten texts in the equations that follow.
Also, for a compatible pair of a matrix $A$ and a set $\mathcal{V}$, we define $A\mathcal{V}\defas\set{Av}{v\in\mathcal{V}}$.
Additionally, for a set $\mathcal{V}\subseteq\R^n$, we will set $\Yset_\mathcal{V}\defas\W_1\tr\mathcal{V}$, \ie $\Yset_\mathcal{V}$ is the set of $\y$-coordinates of points in $\mathcal{V}$.

\subsection*{Probabilistic setting}
Let $(\Omega,\mathcal{A},\Prob{})$ be an abstract probability space.
The random variable $\X:\Omega\to\R^n$ stands for $\x\in\R^n$ viewed as a random element whose push-forward measure $\Prob{\X} \defas \prob{}{\X\in\cdot}$ has Lebesgue density $d\Prob{\X}/d\leb = \px$.
The random variables $\Y \defas \W_1\tr\X$ and $\Z \defas \W_2\tr\X$ representing random elements in the active and inactive subspaces also induce corresponding push-forward measures $\Prob{\Y} \defas \prob{}{\Y\in\cdot}$ and $\Prob{\Z} \defas \prob{}{\Z\in\cdot}$.
It is possible to define a joint probability density function for the active and inactive variables with $\px$, \ie
\begin{equation}
\px(\x) = \px(\xyz) \asdef \pyz(\y,\z).
\end{equation}
Note that $\pyz$ inherits boundedness from $\px$.
The marginal densities are also defined in the usual way \cite[Section~20~and~33]{billingsley1995probability}, \ie
\begin{equation}
	\py(\y) \defas \int_{\R^{n-k}}{\pyz(\y,\z)\dz}
\end{equation}
and
\begin{equation}
	\pz(\z) \defas \int_{\R^k}{\pyz(\y,\z)\dy}.
\end{equation}
Note that
\begin{equation}
	\frac{d\Prob{\Y}}{d\leb} = \py \quad\text{and}\quad \frac{d\Prob{\Z}}{d\leb} = \pz.
\end{equation}
For convenience, we will define domains for the active and inactive variables, \ie
\begin{equation}
	\Yset \defas \W_1\tr\Xset \subseteq\R^k \quad\text{and}\quad \Zset \defas \W_2\tr\Xset \subseteq\R^{n-k}.
\end{equation}
Note that $\Yset$ will be the domain of the low-dimensional function $g$ approximating $f$.

The lemma that follows shows that $\Yset$ and $\Zset$ can be characterized as sets of vectors in the active and inactive subspaces, respectively, with positive marginal density values.
Therefore, let
\begin{equation}
	\Yset^*\defas\left\lbrace \y\in\R^k\;\lvert\; \py(\y)>0 \right\rbrace \quad  \text{and} \quad \Zset^*\defas\left\lbrace \z\in\R^{n-k}\;\lvert\; \pz(\z)>0 \right\rbrace.
\end{equation}
We need the result for a proper definition of conditional densities.
\begin{lemma}
\label{lem:Yset_py}
It holds that
\begin{equation}
	\Yset = \Yset^* \quad \text{and} \quad \Zset = \Zset^*.
\end{equation}
\end{lemma}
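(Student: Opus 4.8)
The plan is to prove $\Yset = \Yset^*$ by establishing the two inclusions separately; the identity $\Zset = \Zset^*$ then follows by the identical argument with the roles of $\Y$ and $\Z$, and of $\W_1$ and $\W_2$, interchanged. The central structural fact I would exploit is that, because $\W = [\W_1\;\;\W_2]$ is orthogonal, the coordinate map $\Phi(\x) \defas (\W_1\tr\x, \W_2\tr\x)$ is a linear isomorphism of $\R^n$ onto $\R^k\times\R^{n-k}$ with inverse $(\y,\z)\mapsto\xyzW{\y}{\z}{\W}$, and in particular a homeomorphism. Since $\pyz(\y,\z) = \px(\xyz)$, we have $\pyz = \px\circ\Phi\inv$, so $\pyz(\y,\z)>0$ holds exactly when $\xyz\in\Xset$, and the block relations $\W_1\tr\W_1 = \I_k$, $\W_1\tr\W_2 = \vec{0}$ coming from $\W\tr\W = \I_n$ give $\W_1\tr\xyz = \y$.

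For the inclusion $\Yset^*\subseteq\Yset$, suppose $\py(\y)>0$. Then $\int_{\R^{n-k}}\pyz(\y,\z)\dz>0$, so the nonnegative integrand is strictly positive on a set of positive $(n-k)$-dimensional Lebesgue measure; in particular there is at least one $\z$ with $\pyz(\y,\z)>0$, \ie $\xyz\in\Xset$. Applying $\W_1\tr$ and using $\W_1\tr\xyz=\y$ yields $\y\in\W_1\tr\Xset=\Yset$. This direction is routine.

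The reverse inclusion $\Yset\subseteq\Yset^*$ is where the main obstacle lies. If $\y\in\Yset$, there is $\x_0\in\Xset$ with $\W_1\tr\x_0 = \y$; writing $\z_0\defas\W_2\tr\x_0$ we have $\pyz(\y,\z_0) = \px(\x_0)>0$. However, a single point $\z_0$ carries zero Lebesgue measure in $\R^{n-k}$, so pointwise positivity of the integrand is not enough to conclude $\py(\y)>0$: the difficulty is precisely to upgrade positivity at one point to positivity on a set of positive $\z$-measure. Here I would use the standing assumption that $\Xset$ is open. Since $\Phi$ is a homeomorphism, $\Phi(\Xset)$ is open in $\R^k\times\R^{n-k}$ and contains $(\y,\z_0)$; hence the $\y$-slice $\set{\z\in\R^{n-k}}{(\y,\z)\in\Phi(\Xset)}$ is open and nonempty, so it contains an open ball $B$ around $\z_0$ of positive $(n-k)$-dimensional Lebesgue measure. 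On $B$ the integrand satisfies $\pyz(\y,\cdot)>0$, whence $\py(\y)\ge\int_B\pyz(\y,\z)\dz>0$ and $\y\in\Yset^*$.

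I expect the openness of $\Xset$ to be the load-bearing hypothesis: it is exactly what rules out the degenerate situation in which $\px$ is positive along a lower-dimensional slice of zero measure while the corresponding marginal still vanishes. Notably, neither continuity nor boundedness of $\px$ is needed for this lemma, only that $\Xset$ is open and that $\W$ is orthogonal. The argument for $\Zset=\Zset^*$ is verbatim the same after exchanging $(\Y,\W_1)$ with $(\Z,\W_2)$ and integrating over $\R^k$ instead of $\R^{n-k}$.
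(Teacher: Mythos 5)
Your proof is correct, and your easy inclusion $\Yset^*\subseteq\Yset$ is identical to the paper's. The genuine difference is in the hard inclusion $\Yset\subseteq\Yset^*$: the paper invokes \emph{both} the openness of $\Xset$ \emph{and} the continuity of $\px$ on $\Xset$ to produce a neighborhood $\Zset_\epsilon$ of $\z_0$ on which $\pyz(\y,\cdot)\geq\rho/2$ for $\rho\defas\pyz(\y,\z_0)$, and then concludes from the explicit bound $\py(\y)\geq\frac{\rho}{2}\leb^{n-k}(\Zset_\epsilon)>0$. You instead use only openness: the $\y$-slice of the open set $\Phi(\Xset)$ is open, hence contains a ball $B$ around $\z_0$ on which $\pyz(\y,\cdot)$ is strictly positive (with no uniform lower bound claimed), and you finish with the measure-theoretic fact that a measurable function strictly positive on a set of positive Lebesgue measure has strictly positive integral. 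Your route is mildly more general and confirms your closing remark: continuity (and boundedness) of $\px$ is not needed for this lemma, only openness of $\Xset$, orthogonality of $\W$, and measurability of the sections $\pyz(\y,\cdot)$ (which is needed anyway for $\py$ to be defined). What the paper's continuity argument buys in exchange is a quantitative lower bound on $\py(\y)$ and a completely elementary conclusion; your version hides the corresponding step inside the positivity fact, whose standard proof (some level set $\{\z\in B \;:\; \pyz(\y,\z)>1/m\}$ must have positive measure) is in effect a measure-theoretic surrogate for the uniform bound that the paper extracts from continuity.
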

\begin{proof}
We will only show the proof for the equality for $\Yset$ since the same arguments follow for $\Zset$.
Let us take an arbitrary $\y\in\Yset$ and choose $\x\in\Xset$ such that $\y=\W_1\tr\x$.
Set $\z'\defas\W_2\tr\x$ and $\rho\defas\pyz(\y,\z')>0$.
Due to the openness of $\Xset$ and the continuity of $\px$ on $\Xset$, we can find a neighborhood $\Zset_\epsilon$ of $\z'$ such that
\begin{equation}
\px(\xyzW{\y}{\z_\epsilon}{}) \geq \frac{\rho}{2}
\end{equation}
for each $\z_\epsilon\in\Zset_\epsilon$.
It follows that
\begin{equation}
	\py(\y) = \int_{\R^{n-k}}{\pyz(\y,\z)\dz} \geq \int_{\Zset_\epsilon}{\pyz(\y,\z_\epsilon)\dz_\epsilon} \geq \frac{\rho}{2}\leb^{n-k}(\Zset_\epsilon) > 0,
\end{equation}
and thus $\y\in\Yset^*$.
Reversely, let us choose $\y^*\in\Yset^*$.
Since $\py(\y^*) > 0$, it exists a $\z^*\in\R^{n-k}$ such that $\x^* \defas \xyzW{\y^*}{\z^*}{} \in\Xset$.
Since $\y^* = \W_1\tr\x^*$, it follows that $\y^*\in\Yset$.
\end{proof}

\begin{corollary}
It holds that $\Prob{\Y}(\Yset)=1$ and $\Prob{\Z}(\Zset)=1$, \ie $\Y\in\Yset$ a.s. and $\Z\in\Zset$ a.s.
\end{corollary}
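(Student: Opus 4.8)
The plan is to leverage Lemma~\ref{lem:Yset_py} together with the fact that $\Prob{\Y}$ is absolutely continuous with density $\py$. First I would recall that, by the lemma, $\Yset=\Yset^*$, so the complement $\R^k\setminus\Yset$ coincides with the set where the marginal density vanishes, namely $\set{\y\in\R^k}{\py(\y)=0}$. Since a probability measure given by a Lebesgue density assigns to any Borel set the integral of the density over that set, I would compute $\Prob{\Y}(\R^k\setminus\Yset)=\int_{\R^k\setminus\Yset}\py(\y)\dy$ and observe that the integrand is identically zero on this region, whence the integral vanishes.

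From $\Prob{\Y}(\R^k\setminus\Yset)=0$ and the total mass being one it follows immediately that $\Prob{\Y}(\Yset)=1$, \ie $\Y\in\Yset$ almost surely. The identical argument, replacing $\py$, $\Yset$, $\Yset^*$ and $\R^k$ by $\pz$, $\Zset$, $\Zset^*$ and $\R^{n-k}$, yields $\Prob{\Z}(\Zset)=1$ and hence $\Z\in\Zset$ almost surely.

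There is no genuine obstacle here: the corollary is a direct consequence of the set identity in Lemma~\ref{lem:Yset_py} and the defining property of a density. The only point worth stating carefully is that $\Yset$ (and likewise $\Zset$) is Borel measurable, which holds because $\py$ is measurable and $\Yset=\Yset^*$ is the preimage of $(0,\infty)$ under $\py$; this guarantees that the integral above is well defined and that the complementation step is legitimate.
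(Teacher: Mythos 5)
Your proof is correct and coincides with the paper's reasoning: the paper states this corollary without proof, as an immediate consequence of Lemma~\ref{lem:Yset_py} and the fact that $d\Prob{\Y}/d\leb = \py$ (resp.\ $d\Prob{\Z}/d\leb = \pz$), which is precisely the argument you spell out. Your additional remark on the Borel measurability of $\Yset$ --- via $\Yset=\Yset^*$ being the preimage of $(0,\infty)$ under the measurable marginal $\py$ --- is a careful touch, since the openness of $\Yset$ is only established in the lemma that follows the corollary.
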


\begin{lemma}
The sets $\Yset\subseteq\R^k$ and $\Zset\subseteq\R^{n-k}$ are open in respective topological spaces.
\end{lemma}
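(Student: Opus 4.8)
The plan is to exploit the fact that $\W_1$ and $\W_2$ carry orthonormal columns, which turns the projections $\W_1\tr$ and $\W_2\tr$ into open maps admitting isometric sections. Since $\W=[\W_1\;\W_2]$ is orthogonal, $\W\tr\W=\I_n$, and reading off the diagonal blocks gives $\W_1\tr\W_1=\I_k$ and $\W_2\tr\W_2=\I_{n-k}$. In particular $\W_1$ is a right inverse of $\W_1\tr$, and because its columns are orthonormal it is a linear isometry, $\norm{\W_1\u}=\norm{\u}$ for all $\u\in\R^k$; the analogous statements hold for $\W_2$. These two facts are essentially all the proof needs.

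First I would fix an arbitrary $\y_0\in\Yset=\W_1\tr\Xset$ and choose a preimage $\x_0\in\Xset$ with $\W_1\tr\x_0=\y_0$. Since $\Xset$ is open, there is a radius $r>0$ with the open ball $B(\x_0,r)\subseteq\Xset$. For any $\y\in\R^k$ satisfying $\norm{\y-\y_0}<r$, I would lift it back into $\R^n$ along the section $\W_1$ by setting $\x\defas\x_0+\W_1(\y-\y_0)$. Then $\W_1\tr\x=\y_0+\W_1\tr\W_1(\y-\y_0)=\y_0+(\y-\y_0)=\y$, so $\x$ is a genuine preimage of $\y$, while the isometry property yields $\norm{\x-\x_0}=\norm{\W_1(\y-\y_0)}=\norm{\y-\y_0}<r$. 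Hence $\x\in B(\x_0,r)\subseteq\Xset$, and therefore $\y=\W_1\tr\x\in\Yset$. This shows $B(\y_0,r)\subseteq\Yset$, so every point of $\Yset$ is interior and $\Yset$ is open.

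The argument for $\Zset=\W_2\tr\Xset$ is verbatim after replacing $\W_1$ by $\W_2$ and $\I_k$ by $\I_{n-k}$, using $\W_2\tr\W_2=\I_{n-k}$ and $\norm{\W_2\u}=\norm{\u}$. I do not expect any serious obstacle: the only point requiring care is to lift nearby $\y$'s back into $\Xset$ through the column map $\W_1$ rather than through some arbitrary preimage, since a generic inverse image of the projection need not preserve distances and would not give a usable ball. One might instead hope to deduce openness from the identity $\Yset=\Yset^*$ established in Lemma~\ref{lem:Yset_py}, viewing $\Yset^*$ as a superlevel set of $\py$; but that route would require continuity of the marginal $\py$, which is not among the standing assumptions (only $\px$ is assumed continuous on $\Xset$), so the linear-algebra argument above is the clean one.
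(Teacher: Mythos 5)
Your proof is correct, and it takes a genuinely different route from the paper's. You argue by pure linear algebra: using $\W_1\tr\W_1=\I_k$ and the isometry $\norm{\W_1\u}=\norm{\u}$, you lift any $\y$ with $\norm{\y-\y_0}<r$ to the explicit preimage $\x_0+\W_1(\y-\y_0)\in B(\x_0,r)\subseteq\Xset$, which shows that $\W_1\tr$ maps $B(\x_0,r)$ onto the full ball $B(\y_0,r)$, hence $B(\y_0,r)\subseteq\Yset$. The paper instead argues measure-theoretically: for $\y\in\Yset_{B(\x_0)}\defas\W_1\tr B(\x_0)$ it notes that the fiber $\set{\z\in\Zset}{\xyz\in B(\x_0)}$ has positive Lebesgue measure and $\pyz>0$ there, concludes $\py(\y)>0$, and then invokes Lemma~\ref{lem:Yset_py} (\ie $\Yset=\Yset^*$) to place $\y$ in $\Yset$. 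Two observations on the comparison. First, the paper's argument leaves implicit exactly the point you make explicit: that $\Yset_{B(\x_0)}$ is a neighborhood of $\y_0$ (the orthogonal projection of an open ball is an open ball of the same radius), without which ``it suffices to show $\Yset_{B(\x_0)}\subseteq\Yset$'' would not yield openness; your isometric-section computation is precisely the justification of that step. (Moreover, the inclusion $\Yset_{B(\x_0)}\subseteq\Yset$ is immediate from $B(\x_0)\subseteq\Xset$ by monotonicity of images, so the density computation in the paper does less work than it appears to.) Second, your proof is self-contained: it uses neither Lemma~\ref{lem:Yset_py} nor any property of $\px$ beyond openness of $\Xset$, whereas the paper's proof is intertwined with the probabilistic characterization of $\Yset$. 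Your closing remark is also accurate: one cannot shortcut by viewing $\Yset^*$ as a superlevel set of $\py$, since continuity of $\py$ is not among the standing assumptions.
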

\begin{proof}
We will only show the proof for the openness for $\Yset$ since the same arguments follow for $\Zset$.
Let $\y_0\in\Yset$.
By definition, there exists an  $\x_0\in\Xset$ with $\y_0=\W_1\tr\x_0$.
Since $\Xset$ is open, there exists a ball $B(\x_0)\subseteq\Xset$.
Since $\y_0\in\Yset_{B(\x_0)}$, it suffices to show that $\Yset_{B(\x_0)}\subseteq\Yset$.

Now, let us take $\y\in\Yset_{B(\x_0)}$.
We can compute
\begin{align}
\py(\y) &= \int_{\Zset}{\pyz(\y,\z)\dz} \\
&\geq \int_{\set{\z\in\Zset}{\xyz\in B(\x_0)}}{\pyz(\y,\z)\dz} > 0.
\end{align}
Since $\W$ is orthogonal, it causes only a rotation.
The set $\set{\z\in\Zset}{\xyz\in B(\x_0)}$ has a positive measure under $\leb^{n-k}$ which justifies the last equation above.
The result follows by Lemma~\ref{lem:Yset_py}.
\end{proof}
In particular, the previous lemma implies that $\Yset\in\borel{\R^k}$ and $\Zset\in\borel{\R^{n-k}}$.
Another auxiliary result shows that it is enough for the marginal densities to integrate over $\Yset$ and $\Zset$, respectively.
\begin{lemma}
\label{lem:marg_on_YZ}
It holds that
\begin{equation}
	\py(\y) = \int_{\Zset}{\pyz(\y,\z)\dz}, \quad \y\in\R^k, \quad\text{and}\quad\pz(\z) = \int_{\Yset}{\pyz(\y,\z)\dy}, \quad \z\in\R^{n-k}.
\end{equation}
\end{lemma}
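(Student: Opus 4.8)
The plan is to prove the identity for $\py$ only; the statement for $\pz$ then follows by the symmetric argument, interchanging the roles of $\W_1,\W_2$ and of $\Yset,\Zset$. Starting from the definition $\py(\y)=\int_{\R^{n-k}}\pyz(\y,\z)\dz$, I would split the domain as $\R^{n-k}=\Zset\disjunion(\R^{n-k}\setminus\Zset)$ and show that the integral over the complement $\R^{n-k}\setminus\Zset$ vanishes.

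The crucial observation is that for every fixed $\y\in\R^k$ the integrand already vanishes pointwise on the complement, \ie $\pyz(\y,\z)=0$ whenever $\z\notin\Zset$. To establish this, I would use the identity $\pyz(\y,\z)=\px(\xyz)$ together with the orthogonality relations $\W_2\tr\W_1=\vec{0}$ and $\W_2\tr\W_2=\I$, which give $\W_2\tr\xyz=\W_2\tr(\W_1\y+\W_2\z)=\z$. Consequently, if $\xyz$ were an element of $\Xset$, then $\z=\W_2\tr\xyz$ would lie in $\W_2\tr\Xset=\Zset$, contradicting $\z\notin\Zset$. Hence $\xyz\notin\Xset$, and by the definition of $\Xset$ as the set where $\px$ is positive we obtain $\px(\xyz)=0$, so indeed $\pyz(\y,\z)=0$.

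With this in hand the claim is immediate: since the integrand is zero on $\R^{n-k}\setminus\Zset$, we get $\int_{\R^{n-k}\setminus\Zset}\pyz(\y,\z)\dz=0$, and therefore $\py(\y)=\int_{\Zset}\pyz(\y,\z)\dz$ for every $\y\in\R^k$. Note that this argument never uses whether $\y$ itself lies in $\Yset$, so it applies uniformly to all $\y\in\R^k$ as stated.

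There is no serious obstacle here; the only point requiring care is the bookkeeping with the orthogonality of $\W$ that identifies $\W_2\tr\xyz$ with $\z$, since this is exactly what confines the support of $\pyz(\y,\cdot)$ to $\Zset$. In particular, in contrast to the preceding lemmas, this argument does not even require Lemma~\ref{lem:Yset_py}.
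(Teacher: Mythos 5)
Your proposal is correct and follows essentially the same route as the paper's proof: split $\R^{n-k}$ into $\Zset$ and its complement, and show the integrand vanishes on the complement via the contradiction argument that $\xyz\in\Xset$ would force $\z=\W_2\tr\xyz\in\W_2\tr\Xset=\Zset$. Your version merely spells out the orthogonality bookkeeping ($\W_2\tr\W_1=\vec{0}$, $\W_2\tr\W_2=\I$) that the paper leaves implicit, and your remark that Lemma~\ref{lem:Yset_py} is not needed is likewise consistent with the paper's argument.
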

\begin{proof}
We will only show the proof for the equality for $\py$ since the same arguments follow for $\pz$.
Let $\y\in\R^k$.
We can write
\begin{equation}
\label{eq:pyz_Rnk_Z}
	\py(\y)=\int_{\Zset}{\pyz(\y,\z)\dz} + \int_{\R^{n-k}\setminus\Zset}{\pyz(\y,\z)\dz}.
\end{equation}
For $\z\in\R^{n-k}\setminus\Zset$, it holds that $\xyz\not\in\Xset$; otherwise, $\z\in\Zset$, which is contradictory.
It follows that in \eqref{eq:pyz_Rnk_Z},
\begin{equation}
	\int_{\R^{n-k}\setminus\Zset}{\pyz(\y,\z)\dz} = \int_{\R^{n-k}\setminus\Zset}{\px(\xyz)\dz} = 0
\end{equation}
implying the desired result.
\end{proof}

As a consequence of Lemma~\ref{lem:Yset_py}, we are able to define a proper conditional density on $\R^{n-k}$ given $\y\in\Yset$, \ie
\begin{equation}
	\pzy(\z|\y) \defas \frac{\pyz(\y,\z)}{\py(\y)}, \quad \z\in\R^{n-k}.
\end{equation}
Note that $\pzy(\z|\y) = 0$ for $\z\not\in\Zset$ and arbitrary $\y\in\Yset$ as shown in the previous proof.
Thus, it is possible to define a regular conditional probability distribution of $\Z$ given $\Y=\y$ for $\y\in\Yset$,
\begin{equation}
	\label{eq:prob_Zy}
	\Prob{\Z|\Y}^\y \defas \prob{}{\Z\in\cdot \;|\; \Y=\y}.
\end{equation}
For details of the construction, see \eg \cite{durrett2010probability}.
This can be connected to the respective conditional density by
\begin{equation}
	\frac{d\Prob{\Z|\Y}^\y}{d\leb} = \pzy(\cdot\,|\,\y).
\end{equation}
for $\y\in\Yset$.
For $\y\not\in\Yset$, let us define $\pzy(\z|\y)=0$ for all $\z\in\R^{n-k}$.
The random variable $\Z^\y \sim \Prob{\Z|\Y}^\y$, $\y\in\Yset$, is drawn from the conditional probability distribution defined in \eqref{eq:prob_Zy}.
However, it will be necessary to also regard $\y$ as random which we denote with the random variable $\Y$.
That is, the measure where $\Z^\Y$ is drawn from is also random.
An abstract framework to deal with in this context is known as \textit{random measure} (see \eg \cite{kallenberg2017random}).

In order to apply Fubini's theorem, which requires product measurability of the function to be integrated, in Theorem~\ref{thm:var_mc} and \ref{thm:var_mc_pert}, we need to prove a measurability result for the map
\begin{align}
	\Z^\y : (\Yset\times\Omega,\, \borel{\Yset}\otimes\mathcal{A}) &\to (\R^{n-k},\,\borel{\R^{n-k}}), \label{eq:Zy} \\
	(\y,\omega) &\mapsto \Z^\y(\omega).
\end{align}
The result will be used in Lemma~\ref{lem:gN_meas} to obtain a product measurable function.
Note that we regard $\Yset\subseteq\R^k$ as a topological space equipped with the usual subspace topology denoted by $\borel{\Yset}$.

\begin{lemma}
\label{lem:Zy_pr_meas}
The map $(\y,\omega) \mapsto \Z^\y(\omega)$ is $\borel{\Yset}\otimes\mathcal{A}$-measurable.
\end{lemma}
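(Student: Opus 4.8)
The plan is to realise the map of the statement through the canonical conditional-quantile (Rosenblatt) transform and to obtain its joint measurability by composition, staying throughout on the given space $(\Omega,\mathcal{A},\Prob{})$. Write $m\defas n-k$.

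First I would record that $\y\mapsto\Prob{\Z|\Y}^\y$ is a probability kernel from $(\Yset,\borel{\Yset})$ to $(\R^m,\borel{\R^m})$: by the regular-conditional-distribution construction, $\y\mapsto\Prob{\Z|\Y}^\y(B)$ is Borel for each $B\in\borel{\R^m}$, and since $\pyz$ is jointly Borel and $\py(\y)>0$ on $\Yset$ by Lemma~\ref{lem:Yset_py}, the successive conditional marginals obtained by integrating $\pzy(\cdot\,|\,\y)$ over the trailing coordinates are jointly measurable in their arguments by Tonelli's theorem. From these I would build, for $i=1,\dots,m$, the conditional distribution functions $F_i(t\,|\,\y,\z_1,\dots,\z_{i-1})$, each jointly measurable in $(\y,\z_1,\dots,\z_{i-1})$ and right-continuous nondecreasing in $t$.

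Next I would pass to the generalised inverses $Q_i\defas\inf\set{t\in\R}{F_i(t\,|\,\y,\z_1,\dots,\z_{i-1})\geq u}$. Their joint measurability is the crux and is settled by the level-set identity $\set{(\y,\z_1,\dots,\z_{i-1},u)}{Q_i\leq t}=\set{(\y,\z_1,\dots,\z_{i-1},u)}{u\leq F_i(t\,|\,\y,\z_1,\dots,\z_{i-1})}$, whose right-hand side is measurable because $F_i$ is; as the half-lines $\{Q_i\leq t\}$ generate, each $Q_i$ is jointly measurable. Composing the $Q_i$ coordinatewise gives a jointly measurable transform $Q:\Yset\times(0,1)^m\to\R^m$ such that $Q(\y,\cdot)$ pushes the uniform law on $(0,1)^m$ forward to $\Prob{\Z|\Y}^\y$ for every $\y\in\Yset$.

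Finally, since $\px$ is a Lebesgue density, the randomness already present on $(\Omega,\mathcal{A},\Prob{})$ is atomless, so the space carries a uniform random vector $\U:\Omega\to(0,1)^m$ and no enlargement is required. Taking $\Z^\y(\omega)=Q(\y,\U(\omega))$ as the realisation named in the statement, the map $(\y,\omega)\mapsto\Z^\y(\omega)$ is the composition of the $\borel{\Yset}\otimes\mathcal{A}$-measurable map $(\y,\omega)\mapsto(\y,\U(\omega))$ with the jointly measurable $Q$, hence $\borel{\Yset}\otimes\mathcal{A}$-measurable, as claimed. The principal obstacle is the joint measurability of the generalised inverses, which the level-set identity resolves; the only other point needing care is that the intermediate conditional densities are well defined and jointly measurable, which follows from $\py>0$ on $\Yset$ and Tonelli's theorem applied to the bounded density $\pyz$.
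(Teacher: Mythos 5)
Your proof is correct, and it takes a genuinely different route at the multivariate step. The scalar case is handled identically in the paper: there, too, the crux is the level-set identity $\set{(\y,u)}{G^{\y}(u)\leq t}=\set{(\y,u)}{u\leq F^{\y}(t)}$ for the generalized inverse of a jointly measurable distribution function. For vector-valued $\Z^{\y}$, however, the paper writes $\Z^{\y}=\mathbf{G}^{\y}(\U)$ with $\U\sim C_{\Z^{\y}}$, a copula of $\Z^{\y}$, and applies the scalar argument componentwise to the marginal quantile functions; you instead use the Rosenblatt (sequential conditional quantile) transform, inverting the chain of conditional distribution functions $F_i(t\,|\,\y,z_1,\dots,z_{i-1})$ and feeding it a single uniform vector on $(0,1)^{n-k}$ whose law does not depend on $\y$. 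Your variant buys a tighter argument: in the paper's decomposition the copula law $C_{\Z^{\y}}$ varies with $\y$, so the representation tacitly requires a family $\U^{\y}$ whose own joint measurability in $(\y,\omega)$ is never established (only $(\y,\u)\mapsto\mathbf{G}^{\y}(\u)$ is shown measurable), whereas with a single fixed uniform source the conclusion follows cleanly by composing the jointly measurable $Q$ with $(\y,\omega)\mapsto(\y,\U(\omega))$. You also make explicit why the given space carries such a $\U$ (atomlessness of $\Prob{}$ on $\sigma(\X)$, since $\X$ has a Lebesgue density), a point the paper leaves implicit in ``$U\sim\mathcal{U}([0,1])$''; note only that a $\U$ manufactured from $\X$ need not be independent of $\Y$, which is harmless here since the lemma asserts measurability alone and the paper's later uses integrate the $\Z$-randomness over a separate copy of $\Omega$. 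The cost of your route is the bookkeeping for the intermediate conditional densities --- a fixed default distribution function must be chosen where the conditioning denominators vanish, which is legitimate because those exceptional sets are null under the relevant conditional laws for each fixed $\y\in\Yset$, so the pushforward identity $Q(\y,\cdot)_{\#}\,\mathcal{U}((0,1)^{n-k})=\Prob{\Z|\Y}^{\y}$ still holds for every $\y$ --- and your appeal to Tonelli for their joint measurability is sound since $\pyz$ is nonnegative and product measurable.
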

\begin{proof}
Let $\y_0\in\Yset$.

For the moment, assume that $\Z$ is real-valued and change its notation to $Z$.
Let $F^{\y_0} : \R\to[0,1]$ denote the cumulative distribution function of $Z^{\y_0}$.
Note that the map $\y\mapsto F^\y(t)$ is $\borel{\Yset}$-measurable for each $t\in\R$.
Now, let $t\in\R$.
Indeed, for $\y\in\Yset$, it holds that
\begin{equation}
	F^\y(t) = \frac{\int_{-\infty}^{t}{\pyz(\y,z)\dint{z}}}{\int_{-\infty}^{\infty}{\pyz(\y,z)\dint{z}}}.
\end{equation}
The measurability follows from the product measurability of $\pyz$.
By the probability integral transform, we can write
\begin{equation}
	Z^{\y_0} = G^{\y_0}(U),
\end{equation}
where $U\sim\mathcal{U}([0,1])$ and $G^{\y_0} : [0,1]\to\R$ is the (generalized) inverse distribution function of $F^{\y_0}$.
Hence, it suffices to show the product measurability of $(\y,u) \mapsto G^\y(u)$.
It holds that
\begin{equation}
	\set{(\y,u)\in\Yset\times[0,1]}{G^\y(u)\leq t} = \set{(\y,u)\in\Yset\times[0,1]}{u \leq F^\y(t)} \in \borel{\Yset\times[0,1]}.
\end{equation}
The last step follows from the measurability of $\y\mapsto F^\y(t)$ and the fact that $h_t(\y,u)\defas F^\y(t)-u$ is $\borel{\Yset\times[0,1]}$-measurable.

Now, let us assume that $\Z$ is again $\R^{n-k}$-valued.
Also, let $F^{\y_0}_i$ denote the cumulative distribution function of $Z^{\y_0}_i$, $i\in\enum{n-k}$.
Similar to the one-dimensional case, the map $\y\mapsto F_i^\y(t)$ is $\borel{\Yset}$-measurable for each $t\in\R$, $i\in\enum{n-k}$.
Again, we can write
\begin{equation}
	\Z^{\y_0} = \mathbf{G}^{\y_0}(\U),
\end{equation}
where $\U\sim C_{\Z^{\y_0}}$ and
\begin{equation}
	\mathbf{G}^{\y_0} : [0,1]^{n-k}\to\R^{n-k},\; \u\mapsto\left((F^{\y_0}_1)\inv(u_1),\dots,(F^{\y_0}_{n-k})\inv(u_{n-k})\right).
\end{equation}
The expression $C_{\Z^{\y_0}}$ is called a \textit{copula distribution} of $\Z^{\y_0}$ \cite{nelsen2007introduction} and $(F^{\y_0}_i)\inv$ is the (generalized) inverse distribution of $F^{\y_0}_i$, $i\in\enum{n-k}$.
Hence, it suffices to show the product measurability of $(\y,\u) \mapsto \mathbf{G}^\y(\u)$ by noting that the map $\pi_i(\u)\defas u_i$ is measurable and by applying the steps from the one-dimensional case component-wise.
It follows that $(\y,\omega) \mapsto \Z^\y(\omega)$ is $\borel{\Yset}\otimes\mathcal{A}$-measurable.
\end{proof}

\textbf{Notation}
It is important to clarify some notations that is used throughout the remainder.
We will use three different expectations for the integration of random variables $\X$, $\Y$ and $\Z^\y$.
The respective expectations will be denoted by $\mathbb{E_\X}$, $\mathbb{E}_\Y$ and $\mathbb{E}_\Z$.

Also, we will oftentimes use a change of variables from $\x$ to $(\y,\z)$ during integration.
For that, a useful statement used frequently is proved in the subsequent lemma.

\begin{lemma}
\label{lem:x_yz}
For any real-valued function $h\in L^1(\Omega,\mathcal{A},\Prob{})$, it holds that
\begin{equation}
	\expct{\X}{h(\X)} = \expct{\Y}{\expct{\Z}{h(\xyzW{\Y}{\Z^\Y}{})}}.
\end{equation}
\end{lemma}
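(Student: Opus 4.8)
The plan is to reduce the left-hand side to an ordinary Lebesgue integral against the joint density, split it by Fubini's theorem, factor out the conditional density, and then recognize the resulting iterated integral as the nested expectations on the right-hand side. First I would write the left-hand side via the law of the unconscious statistician and the density $\px$, and then perform the change of variables $\x = \xyz = \W_1\y+\W_2\z$. Since $\W=[\W_1\;\W_2]$ is orthogonal, this map is a rotation with $\abs{\det\W}=1$, so $\dx=\dy\,\dz$; using $\px(\xyz)=\pyz(\y,\z)$ this gives
\[
\expct{\X}{h(\X)} = \int_{\R^n} h(\x)\px(\x)\dx = \int_{\R^k}\int_{\R^{n-k}} h(\xyz)\,\pyz(\y,\z)\dz\dy.
\]

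Next I would invoke Fubini's theorem to pass to an iterated integral. The hypothesis that $h(\X)\in L^1(\Omega,\mathcal{A},\Prob{})$ translates into $h(\x)\px(\x)\in L^1(\R^n,\leb^n)$, and the orthogonal change of variables transfers this to integrability of $(\y,\z)\mapsto h(\xyz)\,\pyz(\y,\z)$ on $(\R^k\times\R^{n-k},\,\leb^k\otimes\leb^{n-k})$. Its product measurability follows from the continuity of $(\y,\z)\mapsto\xyz$, the measurability of $h$, and the product measurability of $\pyz$. Fubini then guarantees simultaneously that the inner integral $\int_{\R^{n-k}} h(\xyz)\,\pyz(\y,\z)\dz$ is a measurable function of $\y$ and that the iterated integral equals the double integral above.

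Then I would factor the joint density as $\pyz(\y,\z)=\py(\y)\,\pzy(\z|\y)$ on the set $\Yset=\Yset^*$ where $\py(\y)>0$, which is legitimate by Lemma~\ref{lem:Yset_py}; outside $\Yset$ the marginal $\py$ vanishes, so the outer integral may be restricted to $\Yset$ without altering its value. This produces
\[
\int_{\Yset}\left(\int_{\R^{n-k}} h(\xyz)\,\pzy(\z|\y)\dz\right)\py(\y)\dy.
\]
Finally, recognizing the inner integral as the expectation of $h(\xyzW{\y}{\Z^\y}{})$ against the conditional law $\Prob{\Z|\Y}^\y$, whose density is $\pzy(\cdot\,|\,\y)$, i.e.\ as $\expct{\Z}{h(\xyzW{\y}{\Z^\y}{})}$, and the outer integral against $\py$ as $\expct{\Y}{\cdot}$, yields the claimed identity.

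The main obstacle I anticipate is the bookkeeping around Fubini rather than any deep difficulty: one must verify that the integrand is genuinely product measurable and integrable so that the inner integral is a well-defined measurable function of $\y$, and confirm that the restriction to $\Yset$ is consistent with the conditional density $\pzy(\cdot\,|\,\y)$ being defined only for $\y\in\Yset$. Everything else amounts to the unit-Jacobian change of variables and the elementary density factorization.
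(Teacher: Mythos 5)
Your proposal is correct and follows essentially the same route as the paper's proof: an orthogonal (unit-Jacobian) change of variables $\x=\xyzW{\y}{\z}{}$, factorization $\pyz(\y,\z)=\pzy(\z|\y)\,\py(\y)$ justified by Lemma~\ref{lem:Yset_py}, and recognition of the iterated integral as the nested expectations $\expct{\Y}{\expct{\Z}{\cdot}}$. The only difference is presentational: you make the Fubini/measurability bookkeeping explicit and work on $\R^k\times\R^{n-k}$ before restricting to $\Yset$, whereas the paper passes directly to the iterated integral over $\Yset\times\Zset$ and leaves that justification implicit.
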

\begin{proof}
Define $\Phi(\y,\z) \defas \xyz = \x$.
Note that $\grad_{\y,\z}(\Phi(\y,\z)) = \left[\W_1\;\;\W_2\right] = \W$.
Integration by substitution for multiple variables gives
\begin{align}
	\expct{\X}{h(\X)} &= \int_{\Xset}{h(\x)\,\px(\x)\dx} \\
	&= \int_{\Yset}{\int_{\Zset}{h(\Phi(\y,\z))\,\px(\Phi(\y,\z))\abs{\det(\W)} \dz} \dy} \\
	&= \int_{\Yset}{\int_{\Zset}{h(\xyz)\,\pyz(\y,\z) \dz} \dy} \label{eq:detW} \\
	&= \int_{\Yset}{\left(\int_{\Zset}{h(\xyz)\,\pzy(\z|\y) \dz}\right) \,\py(\y) \dy} \\
	&= \int_{\Yset}{\left(\int_{\Zset}{h(\xyz) \dint{\Prob{\Z|\Y}^\y}(\z)}\right) \dint{\Prob{\Y}(\y)}} \\
	&= \int_{\Omega}{\left(\int_{\Omega}{h(\xyzW{\Y(\omega_\Y)}{\Z^{\Y(\omega_\Y)}(\omega)}{}) \dint{\Prob{}(\omega)}}\right) \dint{\Prob{}(\omega_\Y)}} \\
	&= \expct{\Y}{\expct{\Z}{h(\xyzW{\Y}{\Z^\Y}{})}}.
\end{align}
In \eqref{eq:detW}, we use that $\det(\W)=\pm 1$ for the orthogonal matrix $\W$.
\end{proof}
\section{Approximating functions in the active subspace}
\label{sec:approx}
Once the active subspace is computed, the function $f$ can be approximated by a function $g$ on a lower-dimensional domain.
One way to define a suitable approximation is by a conditional expectation, i.\,e.
\begin{align}
	g(\y) \defas{}& \expct{\X}{f(\X)\,|\,\Y=\y} \\
	\defas{}& \int_{\R^{n-k}}{f(\xyz) \dint{\Prob{\Z|\Y}^\y}(\z)} \\
	={}& \int_{\R^{n-k}}{f(\xyz)\,\pzy(\z|\y) \dz} \\
	={}& \int_{\Zset}{f(\xyz)\,\pzy(\z|\y) \dz} \label{eq:gy}
\end{align}
for $\y\in\Yset$.
Note that the last line is justified by the fact that $\pzy(\z|\y) = 0$ for $\z\not\in\Zset$ and arbitrary $\y\in\Yset$.
The conditional expectation is known to be the best $L^2$ approximation of $f$ in the active subspace \cite{billingsley1995probability}.
To obtain a function on the same domain as $f$, \ie $\Xset$, let us define
\begin{equation}
	\label{eq:f_g}
	f_g(\x) \defas g(\W_1\tr\x).
\end{equation}

In practice, the weighted integral in \eqref{eq:gy} can be approximated using a finite Monte Carlo sum with
\begin{equation}
	\label{eq:gNy}
	g_N(\y,\cdot) \defas \frac{1}{N} \sum_{j=1}^{N}{f(\xyzW{\y}{\Z^\y_j(\cdot)}{})}, \quad \Z^\y_j\sim\Prob{\Z|\Y}^\y,\;N>0.
\end{equation}
for $\y\in\Yset$.
Note that $g_N(\y,\cdot)$ is again random for every $\y\in\Yset$.
Similar to \eqref{eq:f_g}, we can define a suitable function on $\Xset\times\Omega$ such that
\begin{equation}
	f_{g_N}(\x,\cdot) \defas g_N(\W_1\tr\x,\cdot).
\end{equation}
It is important to recognize the following relationship between the expectations of $f_{g_N}(\X)$ and $g_N(\Y)$.
It holds that
\begin{equation}
	\expct{\X}{f_{g_N}(\X,\cdot)} = \expct{\Y}{\expct{\Z}{f_{g_N}(\xyzW{\Y}{\Z^\Y}{},\cdot)}} = \expct{\Y}{\expct{\Z}{g_N(\Y,\cdot)}} = \expct{\Y}{g_N(\Y,\cdot)}.
\end{equation}
This equation is a crucial point in this manuscript as it makes it clear that both expressions $\expct{}{f_{g_N}(\X,\cdot)}$ and $\expct{\Y}{g_N(\Y,\cdot)}$ are random.
More explicitly, this can be seen in the following equations.
Let $\omega\in\Omega$ be fixed, then
\begin{align}
	\expct{\X}{f_{g_N}(\X,\omega)} &= \int_{\Xset}{f_{g_N}(\x,\omega)\,\px(\x)\dx} \\
	&= \int_{\Yset}{\left(\int_{\Zset}{f_{g_N}(\xyz,\omega)\,\pzy(\z|\y)\dz}\right)\py(\y)\dy} \\
	&= \int_{\Yset}{\left(\int_{\Zset}{g_N(\y,\omega)\,\pzy(\z|\y)\dz}\right)\py(\y)\dy} \label{eq:loose_z} \\
	&= \int_{\Yset}{g_N(\y,\omega)\,\py(\y)\dy} \\
	&= \expct{\Y}{g_N(\Y,\omega)}.
\end{align}
Note that in \eqref{eq:loose_z}, the variable $\z$, "belonging" to $\x$, disappears such that the integral w.r.t. $\z$ becomes $\int_{\Zset}{\pzy(\z|\y)\dz}=1$. The random variables $\Z_j^\y$ within $g_N$ are \textit{not} integrated over $\z$, \ie the variables $\Z_j^\y$ are not bound in terms of formal languages.
This leads to the fact that $\expct{\X}{f_{g_N}(\X,\cdot)}$ is again random.

We can now regard the expressions $\expct{}{\expct{\Y}{g_N}}$ or $\expct{\Y}{\expct{}{g_N}}$.
We will show that both are equal and thus, we will regard the first, \ie $\expct{}{\expct{\Y}{g_N}}$.
In the proof of Theorem~\ref{thm:var_mc}, we thus computed the \textit{expectation of the mean squared error} between $f_g$ and $f_{g_N}$ and had to change the order of integration w.r.t. $\y$ and $\omega$, \ie apply Fubini's theorem.
To do this properly, we need to show the measurability of $g_N$ in the product space $\Yset\times\Omega$.

\begin{lemma}
\label{lem:gN_meas}
The function $g_N: \Yset\times\Omega\to\R$ is $\mathcal{B(\Yset)}\otimes\mathcal{A}$-measurable.
\end{lemma}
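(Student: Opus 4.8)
The plan is to assemble $g_N$ from measurable pieces, using Lemma~\ref{lem:Zy_pr_meas} as the essential input and then closing under the operations that preserve product measurability. Writing $g_N(\y,\omega) = \frac{1}{N}\sum_{j=1}^{N}f(\xyzW{\y}{\Z_j^\y(\omega)}{})$, and recalling that finite sums and scalar multiples of $\borel{\Yset}\otimes\mathcal{A}$-measurable functions are again $\borel{\Yset}\otimes\mathcal{A}$-measurable, it suffices to prove that each summand $(\y,\omega)\mapsto f(\xyzW{\y}{\Z_j^\y(\omega)}{})$ is $\borel{\Yset}\otimes\mathcal{A}$-measurable.

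First I would record that, by Lemma~\ref{lem:Zy_pr_meas} applied to each copy $\Z_j^\y$ (the argument there depends only on the law $\Prob{\Z|\Y}^\y$ and so transfers verbatim to every $j\in\enum{N}$), the map $(\y,\omega)\mapsto\Z_j^\y(\omega)$ is $\borel{\Yset}\otimes\mathcal{A}$-measurable. I would then couple this with the coordinate projection $(\y,\omega)\mapsto\y$, which is trivially $\borel{\Yset}\otimes\mathcal{A}$-measurable, to conclude that $(\y,\omega)\mapsto(\y,\Z_j^\y(\omega))$ is measurable into $\R^k\times\R^{n-k}$. Here I use that $\R^k$ and $\R^{n-k}$ are second countable, so that $\borel{\R^k}\otimes\borel{\R^{n-k}}=\borel{\R^k\times\R^{n-k}}$ and a vector-valued map is measurable exactly when each of its components is.

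Next I would compose with the linear map $\Phi(\y,\z)=\xyz=\W_1\y+\W_2\z$. Being continuous, $\Phi$ is Borel measurable, so $(\y,\omega)\mapsto\xyzW{\y}{\Z_j^\y(\omega)}{}$ is $\borel{\Yset}\otimes\mathcal{A}$-measurable with values in $\R^n$, and composing once more with $f$ yields the measurability of each summand, whence the claim. The routine bookkeeping (sums, projections, composition with a continuous map) is standard, and the single nontrivial ingredient is the joint measurability of $(\y,\omega)\mapsto\Z_j^\y(\omega)$, which is precisely what Lemma~\ref{lem:Zy_pr_meas} supplies. The only point demanding genuine care is the domain of $f$: one must ensure that $\xyzW{\y}{\Z_j^\y}{}$ lands in $\Xset$, where $f$ is defined. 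I would settle this by extending $f$ to a Borel function on all of $\R^n$, e.g.\ by setting it to $0$ on $\R^n\setminus\Xset$; this leaves $g_N$ unchanged because $\pzy(\cdot\,|\,\y)$ is supported where $\xyz\in\Xset$, so that $\xyzW{\y}{\Z_j^\y}{}\in\Xset$ almost surely, as already observed when the conditional density was defined.
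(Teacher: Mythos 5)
Your proof is correct and takes essentially the same route as the paper's: the paper's entire proof is the observation that Lemma~\ref{lem:Zy_pr_meas} gives product measurability of $(\y,\omega)\mapsto\Z^\y(\omega)$ and that ``this implies the result.'' Your write-up merely makes explicit the bookkeeping the paper leaves implicit --- closure under finite sums, composition with the continuous map $\Phi$ and with the measurable $f$, and the extension of $f$ by zero outside $\Xset$ (justified since $\xyzW{\y}{\Z_j^\y}{}\in\Xset$ almost surely) --- which is a worthwhile clarification but not a different argument.
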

\begin{proof}
Lemma~\ref{lem:Zy_pr_meas} proves the product measurability of $\Z^\y$, defined in \eqref{eq:Zy}.
This implies the result.
\end{proof}

One important result, already proved in \cite[Theorem~3.1]{constantine2014active}, gives an upper bound on the mean squared error of $f_g$ by the eigenvalues corresponding to the inactive subspace.
The proof is motivated by a Poincaré inequality for the integral over the inactive subspace.
We have to be careful with the corresponding constant, the \textit{Poincaré constant}, that is depending on the active variable $\y$.
Hence, for the sake of simplicity, we prove the following theorem for two special cases:
\begin{enumerate}
	\item \label{it:unif_conv} $\Prob{\X}=\mathcal{U}(\Xset)$ for $\Xset$ being bounded and convex.
	\item \label{it:normal} $\Prob{\X}=\mathcal{N}(\vec{0},\I)$, \ie $\Xset=\R^n$.
\end{enumerate}

\begin{remark}
	The theorem is actually valid under much more general conditions.
	For example, the authors of \cite{zahm2018certified} use the theory of \textit{logarithmic Sobolev inequalities} allowing weaker assumptions.
	Also, it is known that the Poincaré inequality is valid for measures satisfying a \textit{Muckenhoupt condition} \cite{turesson2007nonlinear}.
\end{remark}

\begin{theorem}
\label{thm:expct_fg_f}
For the cases \eqref{it:unif_conv} and \eqref{it:normal} from above, it holds that
\begin{equation}
	\expct{\X}{\left(f(\X)-f_g(\X)\right)^2} \leq C_{\ref{thm:expct_fg_f}} \left(\lambda_{k+1}+\cdots+\lambda_n\right),
\end{equation}
for some constant $C_{\ref{thm:expct_fg_f}} > 0$.
\end{theorem}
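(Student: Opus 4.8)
The plan is to recognize the mean squared error as an averaged conditional variance and then control each conditional variance by a Poincaré inequality applied on the inactive slice. First I would invoke Lemma~\ref{lem:x_yz} with $h(\x)=(f(\x)-f_g(\x))^2$. Since $f_g(\xyz)=g(\W_1\tr(\W_1\y+\W_2\z))=g(\y)$ by the orthonormality relations $\W_1\tr\W_1=\I$ and $\W_1\tr\W_2=\vec{0}$, this gives
\[
	\expct{\X}{(f(\X)-f_g(\X))^2}=\expct{\Y}{\expct{\Z}{(f(\xyzW{\Y}{\Z^\Y}{})-g(\Y))^2}}.
\]
Because $g(\y)=\expct{\Z}{f(\xyzW{\y}{\Z^\y}{})}$ is exactly the conditional mean, the inner expectation is the conditional variance $\var{\Z}{f(\xyzW{\y}{\Z^\y}{})}$ for each fixed $\y\in\Yset$.

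Next, for each fixed $\y$ I would apply a Poincaré inequality to the map $\z\mapsto f(\xyz)$ with respect to the conditional law $\pzy(\cdot\,|\,\y)$. The chain rule yields $\grad_\z f(\xyz)=\W_2\tr\grad f(\xyz)$, so the Poincaré inequality reads
\[
	\var{\Z}{f(\xyzW{\y}{\Z^\y}{})}\leq C_P(\y)\int_{\Zset}\abs{\W_2\tr\grad f(\xyz)}^2\,\pzy(\z|\y)\dz.
\]
Granting a uniform bound $C_P(\y)\leq C$ over $\y\in\Yset$, I would integrate against $\py$ and apply Lemma~\ref{lem:x_yz} in reverse to obtain $\expct{\X}{(f(\X)-f_g(\X))^2}\leq C\,\expct{\X}{\grad f(\X)\tr\W_2\W_2\tr\grad f(\X)}=C\,\Trace{\W_2\tr\C\W_2}$. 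Finally, since $\C=\W\bm{\Lambda}\W\tr$ with $\W=[\W_1\;\;\W_2]$ orthogonal, a short computation using $\W\tr\W_2=[\vec{0};\I]\tr$ gives $\W_2\tr\C\W_2=\diag{\lambda_{k+1},\ldots,\lambda_n}$, whose trace is precisely $\lambda_{k+1}+\cdots+\lambda_n$, yielding the claimed bound.

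The hard part is establishing the uniform Poincaré constant, which is exactly why the statement is restricted to the two listed cases. For case~\eqref{it:normal}, orthogonality of $\W$ makes $(\Y,\Z)=\W\tr\X$ a standard Gaussian on $\R^n$, so $\Y$ and $\Z$ are independent and the conditional law is $\mathcal{N}(\vec{0},\I)$ independently of $\y$; the Gaussian Poincaré inequality then gives $C_P(\y)=1$. For case~\eqref{it:unif_conv}, the conditional law is uniform on the slice $\set{\z\in\Zset}{\xyz\in\Xset}$, which is convex as the preimage of the convex set $\Xset$ under the affine map $\z\mapsto\W_1\y+\W_2\z$, and whose diameter is bounded by $\diam{\Xset}$ because $\W_2$ is norm-preserving. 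The Payne--Weinberger inequality bounds the Poincaré constant of a bounded convex domain by $(\diam{\cdot}/\pi)^2$, so $C_P(\y)\leq(\diam{\Xset}/\pi)^2$ uniformly. Either case furnishes the constant $C_{\ref{thm:expct_fg_f}}$; a minor technical point I would still need to check is that for a.e.\ $\y$ the slice restriction of $f$ lies in the Sobolev space on which the respective Poincaré inequality is valid.
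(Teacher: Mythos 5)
Your proposal is correct and follows essentially the same route as the paper's proof: Lemma~\ref{lem:x_yz} to rewrite the mean squared error as an averaged conditional variance, a slice-wise Poincaré inequality with a uniform constant (equal to $1$ in the Gaussian case by rotational symmetry, Payne--Weinberger/Bebendorf in the convex uniform case), and the identity $\expct{\X}{\grad f(\X)\tr\W_2\W_2\tr\grad f(\X)} = \Trace{\W_2\tr\C\W_2} = \lambda_{k+1}+\cdots+\lambda_n$, which the paper cites as \cite[Lemma~2.2]{constantine2014active} and you verify directly. The only noteworthy deviation is that your convex-case constant $\left(\diam{\Xset}/\pi\right)^2$ carries the square demanded by the Payne--Weinberger inequality in its variance form, whereas the paper's proof states the constant as $\diam{\Zset_\y}/\pi$ without it --- your version is the accurate one.
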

\begin{proof}
Note that
\begin{equation}
	\label{eq:poinc_mean_zero}
	\expct{\Z}{f\left(\xyzW{\y}{\Z^{\y}}{}\right)-g(\y)} = 0
\end{equation}
for every $\y\in\Yset$.
It follows that
\begin{align}
	\expct{\X}{\left(f(\X)-f_g(\X)\right)^2} &= \expct{\Y}{\expct{\Z}{\left(f\left(\xyzW{\Y}{\Z^\Y}{}\right)-g(\Y)\right)^2}} \label{eq:x_yz} \\
	&= \int_{\Yset}{\left(\int_{\Zset}{\left(f\left(\xyzW{\y}{\z}{}\right)-g(\y)\right)^2 \pzy(\z|\y)\dz}\right) \py(\y)\dy} \\
	&\leq \int_{\Yset}{C_\y \left(\int_{\Zset}{\grad_\z f\left(\xyzW{\y}{\z}{}\right)\tr \grad_\z f\left(\xyzW{\y}{\z}{}\right) \pzy(\z|\y)\dz}\right) \py(\y)\dy} \\
	&\leq C_{\ref{thm:expct_fg_f}} \int_{\Yset}{\left(\int_{\Zset}{\grad_\z f\left(\xyzW{\y}{\z}{}\right)\tr \grad_\z f\left(\xyzW{\y}{\z}{}\right) \pzy(\z|\y)\dz}\right) \py(\y)\dy} \\
	&\leq C_{\ref{thm:expct_fg_f}} \, \expct{\Y}{\expct{\Z}{\grad_\z f\left(\xyzW{\Y}{\Z^\Y}{}\right)\tr \grad_\z f\left(\xyzW{\Y}{\Z^\Y}{}\right)}} \label{eq:poincare} \\
	&= C_{\ref{thm:expct_fg_f}} \, \expct{\X}{\grad_\z f(\X)\tr \grad_\z f(\X)} \\
	&\leq C_{\ref{thm:expct_fg_f}} \left(\lambda_{k+1}+\cdots+\lambda_n\right),
\end{align}
where $C_{\ref{thm:expct_fg_f}} \defas \sup_{\y\in\Yset}{C_\y}$.
In \eqref{eq:x_yz}, we use Lemma~\ref{lem:x_yz} while \eqref{eq:poincare} uses a Poincaré inequality w.r.t. the inactive subspace and $\pzy$ which is applicable due to \eqref{eq:poinc_mean_zero}.
The last line follows from \cite[Lemma~2.2]{constantine2014active}.

Let $\y\in\Yset$.
For case \eqref{it:unif_conv}, \cite{bebendorf2003poincare} proves that $C_\y=\diam{\Zset_\y}/\pi$, where
\begin{equation}
	\Zset_\y\defas\set{\z\in\R^{n-k}}{\pyz(\y,\z)>0}.
\end{equation}
Observe that $C_\y \leq C_{\ref{thm:expct_fg_f}} \leq\diam{\Zset}/\pi = $ since $\Zset_\y\subseteq\Zset$.
It holds that $\diam{\Zset}<\infty$ because $\Xset$ is assumed to be bounded.

The constant for case \eqref{it:normal} can proven to be $C_{\ref{thm:expct_fg_f}}=1$ \cite{chen1982inequality}.
This is possible since $\pzy$ is again the density of the standard normal distribution which follows by its rotational symmetry.
\end{proof}

This means, that if all eigenvalues corresponding to the inactive subspace are small or even zero, then the mean squared error of the conditional expectation is also small or zero.
In contrast, if the inactive subspace is spanned by too many eigenvectors with rather large corresponding eigenvalues, then the approximation might be poor.

Lemma~\ref{lem:gN_meas} not only proves that $\expct{\Y}{g_N(\Y,\cdot)}$ is indeed a random variable, \ie a \textit{measurable} map from $\Omega$ to $\R$, but also suggests that we are ready to prove a crucial result with it.
The main difference to \cite[Theorem~3.2]{constantine2014active} is that the result is an upper bound on the \textit{expectation} of the mean squared error of $f_{g_N}$ to $f_g$.
\begin{theorem}
\label{thm:var_mc}
Under the assumptions of Theorem~\ref{thm:expct_fg_f}, it holds that
\begin{equation}
	\expct{}{\expct{\X}{\left(f_g(\X)-f_{g_N}(\X,\cdot)\right)^2}} = \expct{}{\expct{\Y}{\left(g(\Y)-g_N(\Y,\cdot)\right)^2}} \leq \frac{C_{\ref{thm:expct_fg_f}}}{N}(\lambda_{k+1}+\cdots+\lambda_n).
\end{equation}
\end{theorem}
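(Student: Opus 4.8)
The plan is to reduce the doubly-averaged mean squared error to a single-sample Monte Carlo variance and then invoke Theorem~\ref{thm:expct_fg_f}.

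First I would dispatch the equality. Since both $f_g$ and $f_{g_N}$ factor through the active variable, orthogonality of $\W$ gives $f_g(\xyzW{\y}{\z}{}) = g(\y)$ and $f_{g_N}(\xyzW{\y}{\z}{},\omega) = g_N(\y,\omega)$ for every $\z$ and every fixed $\omega\in\Omega$. Applying Lemma~\ref{lem:x_yz} to the map $\x\mapsto(f_g(\x)-f_{g_N}(\x,\omega))^2$ and using that this integrand is independent of $\z$ collapses the inner $\Z$-expectation, yielding
\[
\expct{\X}{\left(f_g(\X)-f_{g_N}(\X,\omega)\right)^2} = \expct{\Y}{\left(g(\Y)-g_N(\Y,\omega)\right)^2}
\]
pointwise in $\omega$; taking $\expct{}{\cdot}$ over $\Omega$ produces the first equality of the theorem.

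Next I would interchange the two remaining expectations. Because the integrand $(g(\Y)-g_N(\Y,\cdot))^2$ is nonnegative and $\borel{\Yset}\otimes\mathcal{A}$-measurable by Lemma~\ref{lem:gN_meas}, Tonelli's theorem permits
\[
\expct{}{\expct{\Y}{\left(g(\Y)-g_N(\Y,\cdot)\right)^2}} = \expct{\Y}{\expct{}{\left(g(\Y)-g_N(\Y,\cdot)\right)^2}}.
\]
For fixed $\y\in\Yset$ the inner expectation is exactly the mean squared error of a Monte Carlo estimator: $g(\y) = \expct{\Z}{f(\xyzW{\y}{\Z^\y}{})}$ is the true conditional mean, while $g_N(\y,\cdot)$ averages $N$ independent draws from $\Prob{\Z|\Y}^\y$, so the elementary i.i.d.\ variance identity gives
\[
\expct{}{\left(g(\y)-g_N(\y,\cdot)\right)^2} = \var{}{g_N(\y,\cdot)} = \frac{1}{N}\,\expct{\Z}{\left(f(\xyzW{\y}{\Z^\y}{})-g(\y)\right)^2}.
\]
Finally I would reassemble: pulling out the factor $1/N$ and integrating against $\py$, Lemma~\ref{lem:x_yz} (read in the direction that reproduces the left-hand side of Theorem~\ref{thm:expct_fg_f}) identifies
\[
\expct{\Y}{\expct{\Z}{\left(f(\xyzW{\Y}{\Z^\Y}{})-g(\Y)\right)^2}} = \expct{\X}{\left(f(\X)-f_g(\X)\right)^2},
\]
which Theorem~\ref{thm:expct_fg_f} bounds by $C_{\ref{thm:expct_fg_f}}(\lambda_{k+1}+\cdots+\lambda_n)$, delivering the claimed $\frac{C_{\ref{thm:expct_fg_f}}}{N}(\lambda_{k+1}+\cdots+\lambda_n)$.

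The one genuine subtlety is the interchange of $\expct{}{\cdot}$ and $\expct{\Y}{\cdot}$: everything else is the $\sigma^2/N$ scaling of a sample mean combined with a bound already in hand. The legitimacy of that swap rests entirely on the joint measurability of $g_N$ on $\Yset\times\Omega$, which is precisely what Lemma~\ref{lem:gN_meas} supplies via the product measurability of $(\y,\omega)\mapsto\Z^\y(\omega)$ from Lemma~\ref{lem:Zy_pr_meas}; since the integrand is nonnegative, Tonelli applies and no separate integrability check is needed.
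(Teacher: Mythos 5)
Your proof is correct and follows essentially the same route as the paper's: Lemma~\ref{lem:x_yz} for the pointwise-in-$\omega$ equality, the interchange of expectations justified by the product measurability from Lemma~\ref{lem:gN_meas}, the unbiasedness-plus-i.i.d.\ variance identity for the Monte Carlo estimator, and a final application of Lemma~\ref{lem:x_yz} together with Theorem~\ref{thm:expct_fg_f}. Your invocation of Tonelli rather than Fubini is in fact the slightly cleaner choice, since nonnegativity of the integrand spares the separate integrability check.
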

\begin{proof}
For fixed $\omega\in\Omega$,
\begin{align}
\expct{\X}{\left(f_g(\X)-f_{g_N}(\X,\omega)\right)^2} &= \expct{\X}{\left(g(\W_1\tr\X)-g_N(\W_1\tr\X,\omega)\right)^2} \\
&= \expct{\Y}{\left(g(\Y)-g_N(\Y,\omega)\right)^2}. \label{eq:expct_GYw}
\end{align}
The last step is an application of Lemma~\ref{lem:x_yz}.
Note that for fixed $\y\in\Yset$ (and variable $\omega\in\Omega$) it holds that
\begin{align}
\expct{}{g_N(\y,\cdot)} &= \int_{\Omega}{\frac{1}{N}{\sum_{j=1}^{N}{f\left(\xyzW{\y}{\Z^\y_j(\omega)}{}\right)}}} \dint{\Prob{}(\omega)} \\
&= \int_{\Omega}{f\left(\xyzW{\y}{\Z^\y(\omega)}{}\right) \dint{\Prob{}(\omega)}} \label{eq:z_iid} \\
&= \int_{\Zset}{f(\xyz) \dint{\probthree{\Z|\Y}{\y}{\z}}} \\
&= g(\y) \label{eq:expct_gN}.
\end{align}
In \eqref{eq:z_iid}, we use that $\Z^\y_j$, $j\in\enum{n},$ are independent and identically distributed.
Taking expectations w.r.t. $\omega$ for the expression in \eqref{eq:expct_GYw} gives
\begin{align}
&\expct{}{\expct{\Y}{\left(g(\Y)-g_N(\Y,\cdot)\right)^2}} = \expct{\Y}{\expct{}{\left(g(\Y)-g_N(\Y,\cdot)\right)^2}} \label{eq:fubini} \\
&\qquad= \expct{\Y}{\var{}{g_N(\Y,\cdot)}} \label{eq:var_gN} \\
&\qquad= \frac{1}{N^2} \sum_{j=1}^{N}{\expct{\Y}{\var{\Z}{f\left(\xyzW{\Y}{\Z^\Y}{}\right)}}} \\
&\qquad= \frac{1}{N} \expct{\Y}{\var{\Z}{f\left(\xyzW{\Y}{\Z^\Y}{}\right)}} \label{eq:z_iid_2} \\
&\qquad= \frac{1}{N} \expct{\Y}{\expct{\Z}{\left(f\left(\xyzW{\Y}{\Z^\Y}{}\right)-g(\Y)\right)^2}} \\
&\qquad= \frac{1}{N} \expct{\X}{(f(\X)-g(\W_1\tr\X))^2} \label{eq:x_yz_2} \\
&\qquad= \frac{1}{N} \expct{\X}{\left(f(\X)-f_g(\X)\right)^2} \\
&\qquad\leq \frac{C_{\ref{thm:expct_fg_f}}}{N} \left(\lambda_{k+1}+\cdots+\lambda_n\right) \label{eq:expct_fg_f}.
\end{align}
Fubini's theorem is applied in \eqref{eq:fubini} since $g_N$ is product measurable due to Lemma~\ref{lem:gN_meas}.
The result in \eqref{eq:expct_gN} justifies \eqref{eq:var_gN}.
In \eqref{eq:z_iid_2}, we reiterated that $\Z^\y_j$, $j\in\enum{n}$, are independent and identically distributed for a fixed $\y\in\Yset$.
Lemma~\ref{lem:x_yz} with $h(\x)\defas (f(\x)-g(\W_1\tr\x))^2$ leads to \eqref{eq:x_yz_2}.
The last equation in \eqref{eq:expct_fg_f} follows from Lemma~\ref{thm:expct_fg_f}.
\end{proof}

The number of samples $N>0$ in the approximating sum shows up in the bound's denominator which is common for Monte Carlo type approximations (the root mean squared error is $\mathcal{O}(N^{-1/2})$).

\subsection*{Stability}
In practice, the matrix $\C$ in \eqref{eq:C} and its eigendecomposition giving the active subspace are only approximately available.
A well-investigated way to get an approximation is through a finite Monte Carlo sum \cite{constantine2014computing,holodnak2018probabilistic,lam2018multifidelity}.
Independent of the concrete type of approximation, only a perturbed representation of the active and inactive subspaces is available, as denoted here by
\begin{equation}
	\hat{\W} = [\Wpert_1\;\;\Wpert_2].
\end{equation}
This subsection is dedicated to the discussion of MSE analysis for perturbations.
We will repeat the behavior of active subspaces with respect to perturbations from \cite{constantine2014active} and extend theorems where necessary.
In the succeeding expressions, we will denote perturbed terms with a hat ( $\hat{}$ ).
For the sake of clarity, let us recall the definitions of the approximating function and its Monte Carlo version for the context of perturbed quantities.
Analogous to the context without perturbations, the domain of $\hat{g}$ is denoted by $\hat{\Yset} \defas \Wpert_1\tr\Xset$.
Let us define
\begin{equation}
	\label{eq:g_pert}
	\hat{g}(\ypert) \defas \int_{\R^{n-k}}{f(\xyzW{\ypert}{\zpert}{\Wpert})\,\pzypert(\zpert|\ypert) \dint{\zpert}}, \qquad f_{\hat{g}}(\x) \defas \hat{g}(\Wpert_1\tr\x)
\end{equation}
and
\begin{equation}
	\label{eq:gNpert}
	\hat{g}_N(\ypert,\cdot) \defas \frac{1}{N} \sum_{j=1}^{N}{f(\xyzW{\ypert}{\Zpert^\ypert_j(\cdot)}{\Wpert})}, \quad \Zpert^\ypert_j \sim \Prob{\Zpert|\Ypert}^\ypert, \qquad f_{\hat{g}_N}(\x,\cdot) \defas \hat{g}_N(\Wpert_1\tr\x,\cdot)
\end{equation}
for $\y\in\hat{\Yset}$ and $\x\in\Xset$.
Note that it is actually enough to integrate over $\hat{\Zset} \defas \Wpert_2\tr\Xset$ in \eqref{eq:g_pert}.
Let $\norm{\cdot}$ denote the Euclidean norm throughout the rest of the manuscript and assume that
\begin{equation}
	\label{eq:pert_bound}
	\norm{\W-\hat{\W}} \leq \epsilon
\end{equation}
for some $\epsilon>0$.

For the subsequent statements, we need a small helping lemma.
It is already stated in \cite[Lemma~3.4]{constantine2014active}; however, our proof is only slightly different (there appears a $\leq$ in \eqref{eq:norm_W1tr_W2} instead of $=$).
\begin{lemma}
\label{lem:W1W2}
Under the assumption in \eqref{eq:pert_bound}, it holds that
\begin{equation}
	\norm{\W_2\tr\Wpert_2} \leq 1 \text{,} \quad \norm{\W_1\tr\Wpert_2} \leq \epsilon, \quad \text{and} \quad \norm{\hat{\W}_2\tr\W_1}\leq\epsilon.
\end{equation}
\end{lemma}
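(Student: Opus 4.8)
The plan is to reduce the whole statement to three elementary properties of the spectral (Euclidean operator) norm---submultiplicativity, invariance under transposition, and the fact that selecting a block of columns cannot increase the norm---combined with the orthogonality already built into $\W$ and $\hat\W$. The two structural facts I would use at the outset are that both $\W=[\W_1\;\W_2]$ and $\hat\W=[\Wpert_1\;\Wpert_2]$ are orthogonal, so each block has orthonormal columns, and that consequently $\W_1\tr\W_2=\vec 0$. I would also record once the auxiliary remark that any matrix $M$ with orthonormal columns satisfies $\norm{Mv}=\norm{v}$ for all $v$, whence $\norm{M}=\norm{M\tr}\le 1$.

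\textbf{First bound.} For $\norm{\W_2\tr\Wpert_2}\le 1$ I would argue directly: since $\Wpert_2$ has orthonormal columns and $\norm{\W_2\tr}\le 1$ by the auxiliary remark, one has $\norm{\W_2\tr\Wpert_2 v}\le\norm{\W_2\tr}\,\norm{\Wpert_2 v}=\norm{\Wpert_2 v}=\norm{v}$ for every $v$, which is exactly $\norm{\W_2\tr\Wpert_2}\le 1$.

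\textbf{Second bound.} The decisive step for $\norm{\W_1\tr\Wpert_2}\le\epsilon$ is to insert the orthogonality relation $\W_1\tr\W_2=\vec 0$ and write $\W_1\tr\Wpert_2=\W_1\tr(\Wpert_2-\W_2)$. Submultiplicativity together with $\norm{\W_1\tr}\le 1$ then gives $\norm{\W_1\tr\Wpert_2}\le\norm{\Wpert_2-\W_2}$, so it remains to control the perturbation of the second block by the full perturbation. I would make this precise by writing $\Wpert_2-\W_2=(\hat\W-\W)E$, where $E\in\R^{n\times(n-k)}$ is the column-selection matrix picking out the trailing $n-k$ columns; since $E$ has orthonormal columns, $\norm{E}\le 1$, and hence $\norm{\Wpert_2-\W_2}\le\norm{\hat\W-\W}\le\epsilon$ by assumption \eqref{eq:pert_bound}. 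This sub-block passage is exactly where an inequality, rather than the equality of the cited statement, enters.

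\textbf{Third bound.} Finally, $\norm{\hat\W_2\tr\W_1}\le\epsilon$ follows immediately from transposition invariance of the spectral norm, since $\norm{\hat\W_2\tr\W_1}=\norm{(\W_1\tr\Wpert_2)\tr}=\norm{\W_1\tr\Wpert_2}$, which was already bounded by $\epsilon$. I do not expect any substantive obstacle here; the only point needing care is the sub-block inequality $\norm{\Wpert_2-\W_2}\le\norm{\hat\W-\W}$, handled cleanly via the selection matrix $E$ above.
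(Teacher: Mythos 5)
Your proof is correct and follows essentially the same route as the paper's: bound $\norm{\W_2\tr\Wpert_2}$ by submultiplicativity and orthonormality of the blocks, and obtain $\norm{\W_1\tr\Wpert_2}\leq\epsilon$ by inserting $\W_1\tr\W_2=\vec{0}$ to write $\W_1\tr\Wpert_2=\W_1\tr(\Wpert_2-\W_2)$ and then invoking \eqref{eq:pert_bound}. You are merely more explicit on two points the paper leaves implicit---the sub-block inequality $\norm{\Wpert_2-\W_2}\leq\norm{\hat{\W}-\W}$ via a column-selection matrix, and the third bound via transposition invariance---which is a welcome, not a divergent, addition.
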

\begin{proof}
By orthogonality of the columns of $\W_2$ and $\Wpert_2$, it holds that
\begin{equation}
	\norm{\W_2\tr\Wpert_2} \leq \norm{\W_2\tr}\norm{\Wpert_2} = 1.
\end{equation}
Additionally,
\begin{align}
	\label{eq:norm_W1tr_W2}
	\norm{\W_1\tr\Wpert_2} &= \norm{\W_1\tr(\Wpert_2-\W_2)} \leq \norm{\W_1\tr}\norm{\Wpert_2-\W_2} = \norm{\Wpert_2-\W_2} \leq \epsilon.
\end{align}
The last line conforms to the desired result.
\end{proof}

For the sake of completeness, a similar result as in Lemma~\ref{thm:expct_fg_f}, but in the context of perturbations, is presented in the succeeding theorem.
\begin{theorem}
\label{thm:f_fghat}
Under the assumptions of Theorem~\ref{thm:expct_fg_f}, it holds that
\begin{equation}
	\expct{\X}{\left(f(\X)-f_{\hat{g}}(\X)\right)^2} \leq C_{\ref{thm:expct_fg_f}} \left(\epsilon(\lambda_1+\cdots+\lambda_k)^{1/2} + (\lambda_{k+1}+\cdots+\lambda_n)^{1/2}\right)^2.
\end{equation}
\end{theorem}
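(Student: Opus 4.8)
The plan is to mirror the proof of Theorem~\ref{thm:expct_fg_f}, but with the exact inactive subspace spanned by $\W_2$ replaced by the perturbed one spanned by $\Wpert_2$, and then to control the resulting gradient term by the eigenvalues of $\C$ via a triangle inequality rather than a direct application of \cite[Lemma~2.2]{constantine2014active}.

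First I would record that $f_{\hat{g}}(\x)=\hat{g}(\Wpert_1\tr\x)$ is precisely the conditional expectation of $f$ given $\Ypert=\Wpert_1\tr\X$, so the centering identity $\expct{\Zpert}{f(\xyzW{\ypert}{\Zpert^{\ypert}}{\Wpert})-\hat{g}(\ypert)}=0$ holds for every $\ypert\in\hat{\Yset}$. Because $\Wpert$ is orthogonal ($\det\Wpert=\pm1$), the change-of-variables computation of Lemma~\ref{lem:x_yz} applies verbatim to the parametrization $\x=\xyzW{\ypert}{\zpert}{\Wpert}$, and the chain rule gives $\grad_{\zpert}f=\Wpert_2\tr\grad f$. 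Retracing the chain of (in)equalities of Theorem~\ref{thm:expct_fg_f} — change variables, apply the Poincaré inequality in the perturbed inactive variable using the centering identity, and change back — would then yield
\[
	\expct{\X}{\left(f(\X)-f_{\hat{g}}(\X)\right)^2} \le C_{\ref{thm:expct_fg_f}}\,\expct{\X}{\grad_{\zpert}f(\X)\tr\grad_{\zpert}f(\X)} = C_{\ref{thm:expct_fg_f}}\,\expct{\X}{\norm{\Wpert_2\tr\grad f(\X)}^2}.
\]
Here one must check that the Poincaré constant for the perturbed inactive subspace is again bounded by $C_{\ref{thm:expct_fg_f}}$: for case~\eqref{it:unif_conv} the conditional slices of $\hat{\Zset}=\Wpert_2\tr\Xset$ stay convex and $\diam{\hat{\Zset}_{\ypert}}\le\diam{\Xset}$ since $\Wpert_2\tr$ is a contraction, while for case~\eqref{it:normal} rotational invariance of the standard normal keeps the constant equal to $1$.

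The core step is to bound $\expct{\X}{\norm{\Wpert_2\tr\grad f(\X)}^2}$ by the eigenvalues. Using the orthogonal decomposition $\I=\W_1\W_1\tr+\W_2\W_2\tr$, I would write
\[
	\Wpert_2\tr\grad f = \left(\Wpert_2\tr\W_1\right)\left(\W_1\tr\grad f\right) + \left(\Wpert_2\tr\W_2\right)\left(\W_2\tr\grad f\right),
\]
and invoke Lemma~\ref{lem:W1W2}, noting $\norm{\Wpert_2\tr\W_1}=\norm{\W_1\tr\Wpert_2}\le\epsilon$ and $\norm{\Wpert_2\tr\W_2}=\norm{\W_2\tr\Wpert_2}\le1$, to obtain the pointwise estimate $\norm{\Wpert_2\tr\grad f(\x)}\le\epsilon\norm{\W_1\tr\grad f(\x)}+\norm{\W_2\tr\grad f(\x)}$ for all $\x\in\Xset$.

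Finally I would pass to $L^2(\Prob{\X})$ norms: monotonicity of the $L^2$ norm on nonnegative functions followed by Minkowski's inequality gives
\[
	\left(\expct{\X}{\norm{\Wpert_2\tr\grad f(\X)}^2}\right)^{1/2} \le \epsilon\left(\expct{\X}{\norm{\W_1\tr\grad f(\X)}^2}\right)^{1/2} + \left(\expct{\X}{\norm{\W_2\tr\grad f(\X)}^2}\right)^{1/2},
\]
and the identity $\eigVal_i=\int_{\Xset}(\eigVec_i\tr\grad f(\x))^2\px(\x)\dx$ yields $\expct{\X}{\norm{\W_1\tr\grad f(\X)}^2}=\lambda_1+\cdots+\lambda_k$ and $\expct{\X}{\norm{\W_2\tr\grad f(\X)}^2}=\lambda_{k+1}+\cdots+\lambda_n$. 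Squaring this and combining with the first display delivers the claim. The main obstacle I anticipate is the first paragraph, namely transporting the Poincaré step to the perturbed subspace and verifying that $C_{\ref{thm:expct_fg_f}}$ survives unchanged; the algebra of the last two paragraphs is routine, and it is precisely the use of Minkowski (rather than a direct trace bound, which would give the tighter but differently shaped $\epsilon^2\sum_{i\le k}\lambda_i+\sum_{i>k}\lambda_i$) that produces the factored form stated in the theorem.
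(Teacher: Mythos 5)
Your proposal is correct and is essentially the argument the paper relies on: the paper's ``proof'' is just a citation of \cite[Theorem~3.5]{constantine2014active}, whose content is precisely your steps --- repeat the Poincar\'e argument of Theorem~\ref{thm:expct_fg_f} in the perturbed coordinates, decompose $\Wpert_2\tr\grad f = (\Wpert_2\tr\W_1)\W_1\tr\grad f + (\Wpert_2\tr\W_2)\W_2\tr\grad f$ (the chain rule the paper quotes), bound the factors by Lemma~\ref{lem:W1W2}, and apply Minkowski's inequality in $L^2(\Prob{\X})$ together with $\expct{\X}{\norm{\W_1\tr\grad f(\X)}^2}=\lambda_1+\cdots+\lambda_k$ and $\expct{\X}{\norm{\W_2\tr\grad f(\X)}^2}=\lambda_{k+1}+\cdots+\lambda_n$ to obtain the factored bound. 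Your additional care with the Poincar\'e constant for the perturbed inactive variable (convexity of the slices of $\hat{\Zset}$, respectively rotational invariance of the Gaussian) addresses a point the paper and the cited proof pass over silently, and is handled correctly.
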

\begin{proof}
The proof is found \cite[Theorem~3.5]{constantine2014active} which uses the chain rule for calculating $\grad_{\hat{\z}}f = \W_1\tr\hat{\W}_2\grad_\y f + \W_2\tr\hat{\W}_2\grad_\z f$ and the result for Lemma~\ref{lem:W1W2}.
\end{proof}

In this bound, the (large) eigenvalues of the active subspace play some role as well.
Fortunately, they show up with the factor $\epsilon>0$ being itself a bound on the Euclidean norm of $\W-\hat{\W}$.
If this deviation of $\W$ and $\hat{\W}$ is sufficiently small, then the impact of the larger eigenvalues becomes rather small, and the bound is dominated by the eigenvalues corresponding to the inactive subspace.

As a consequence of our framework, a perturbed version of Theorem~\ref{thm:var_mc} can also be proved.
We will stay with the previous notations, $\mathbb{E}_\Y$ and $\mathbb{E}_\Z$, to denote expectations involving the perturbed random variables $\hat{\Y}$ and $\hat{\Z}$, respectively.

\begin{theorem}
\label{thm:var_mc_pert}
Under the assumptions of Theorem~\ref{thm:expct_fg_f}, it holds that
\begin{align}
	\expct{}{\expct{\X}{\left(f_{\hat{g}}(\X)-f_{\hat{g}_N}(\X,\cdot)\right)^2}} &= \expct{}{\expct{\Y}{\left(\hat{g}(\hat{\Y})-\hat{g}_N(\hat{\Y},\cdot)\right)^2}} \\
	&\leq \frac{C_{\ref{thm:expct_fg_f}}}{N} \left(\epsilon(\lambda_1+\cdots+\lambda_k)^{1/2} + (\lambda_{k+1}+\cdots+\lambda_n)^{1/2}\right)^2.
\end{align}
\end{theorem}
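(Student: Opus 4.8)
The plan is to transcribe the proof of Theorem~\ref{thm:var_mc} essentially verbatim, replacing every unperturbed object ($\W_1,\W_2,\Y,\Z,g,g_N$) by its hatted counterpart ($\Wpert_1,\Wpert_2,\Ypert,\Zpert,\hat{g},\hat{g}_N$) and invoking Theorem~\ref{thm:f_fghat} in place of Theorem~\ref{thm:expct_fg_f} at the final step. The key observation is that every auxiliary ingredient survives this substitution because $\Wpert$ is again orthogonal and $\pyzpert$ is again product measurable: the change-of-variables identity of Lemma~\ref{lem:x_yz} holds with $\xyzW{\Ypert}{\Zpert^{\Ypert}}{\Wpert}$ in place of $\xyzW{\Y}{\Z^\Y}{}$ (the Jacobian factor is again $\abs{\det\Wpert}=1$), and the measurability arguments of Lemma~\ref{lem:Zy_pr_meas}, hence Lemma~\ref{lem:gN_meas}, apply unchanged to $\hat{g}_N$, so that $\expct{\Y}{\hat{g}_N(\Ypert,\cdot)}$ is a genuine random variable and Fubini is available.

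Concretely, I would first fix $\omega\in\Omega$ and apply the (perturbed) Lemma~\ref{lem:x_yz} to $h(\x)\defas(f_{\hat{g}}(\x)-f_{\hat{g}_N}(\x,\omega))^2$, which by \eqref{eq:gNpert} equals $(\hat{g}(\Wpert_1\tr\x)-\hat{g}_N(\Wpert_1\tr\x,\omega))^2$; this yields the asserted equality $\expct{\X}{(f_{\hat{g}}(\X)-f_{\hat{g}_N}(\X,\omega))^2}=\expct{\Y}{(\hat{g}(\Ypert)-\hat{g}_N(\Ypert,\omega))^2}$. Next I would establish the unbiasedness $\expct{}{\hat{g}_N(\ypert,\cdot)}=\hat{g}(\ypert)$ for each $\ypert\in\hat{\Yset}$ exactly as in \eqref{eq:expct_gN}, using that the draws $\Zpert^\ypert_j\sim\Prob{\Zpert|\Ypert}^\ypert$ are i.i.d.\ with mean $\hat{g}(\ypert)$ by \eqref{eq:g_pert}.

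With these two facts I would take the expectation over $\omega$, interchange it with $\expct{\Y}{\cdot}$ by Fubini, and recognize the inner term as a variance, $\expct{}{\expct{\Y}{(\hat{g}(\Ypert)-\hat{g}_N(\Ypert,\cdot))^2}}=\expct{\Y}{\var{}{\hat{g}_N(\Ypert,\cdot)}}$. Since $\hat{g}_N$ averages $N$ i.i.d.\ terms, the variance collapses to $\frac{1}{N}\expct{\Y}{\var{\Z}{f(\xyzW{\Ypert}{\Zpert^{\Ypert}}{\Wpert})}}$, which by unbiasedness equals $\frac{1}{N}\expct{\Y}{\expct{\Z}{(f(\xyzW{\Ypert}{\Zpert^{\Ypert}}{\Wpert})-\hat{g}(\Ypert))^2}}$; a final application of the perturbed Lemma~\ref{lem:x_yz} with $h(\x)\defas(f(\x)-\hat{g}(\Wpert_1\tr\x))^2$ rewrites this as $\frac{1}{N}\expct{\X}{(f(\X)-f_{\hat{g}}(\X))^2}$. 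The estimate then follows by bounding this last quantity with Theorem~\ref{thm:f_fghat} and dividing by $N$. I expect no genuine obstacle: the argument is structurally identical to Theorem~\ref{thm:var_mc}, and the only new input is the perturbed one-step error bound of Theorem~\ref{thm:f_fghat}; the sole point needing (routine) care is confirming that the measurability and change-of-variables lemmas transfer to the hatted system, which they do by orthogonality of $\Wpert$ and product measurability of $\pyzpert$.
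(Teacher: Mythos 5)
Your proposal is correct and coincides with the paper's own proof: the paper literally states that the argument follows Theorem~\ref{thm:var_mc} verbatim with the sole change of invoking Theorem~\ref{thm:f_fghat} instead of Theorem~\ref{thm:expct_fg_f} in the final bound. Your write-up merely makes explicit the routine verifications (orthogonality of $\Wpert$ for the change of variables, product measurability of the hatted quantities for Fubini) that the paper leaves implicit.
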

\begin{proof}
The proof follows the same arguments as in Theorem~\ref{thm:var_mc} with the exception of the last step, which uses Theorem~\ref{thm:f_fghat} instead.
\end{proof}

Eventually, according to Theorem~3.6 in \cite{constantine2014active} (see also \cite{constantine2014erratum}), we can give an upper bound on the expectation of the mean squared error between $f$ and $f_{\hat{g}_N}$.
\begin{theorem}
\label{thm:mse_f_fgN}
Under the assumptions of Theorem~\ref{thm:expct_fg_f}, it holds that
\begin{equation}
	\expct{}{\expct{\X}{\left(f(\X)-f_{\hat{g}_N}(\X,\cdot)\right)^2}} \leq C_{\ref{thm:expct_fg_f}} \left(1+N^{-1/2}\right)^2 \left(\epsilon(\lambda_1+\cdots+\lambda_k)^{1/2} + (\lambda_{k+1}+\cdots+\lambda_n)^{1/2}\right)^2.
\end{equation}
\end{theorem}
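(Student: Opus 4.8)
The plan is to read the double expectation $\expct{}{\expct{\X}{\cdot}}$ as the square of an $L^2$ norm on the product space $\Xset\times\Omega$ and then reduce the claim to the triangle (Minkowski) inequality. Concretely, for a product-measurable $h:\Xset\times\Omega\to\R$ I would set $\norm{h}\defas\left(\expct{}{\expct{\X}{h(\X,\cdot)^2}}\right)^{1/2}$, and abbreviate the recurring right-hand factor as $D\defas\epsilon(\lambda_1+\cdots+\lambda_k)^{1/2}+(\lambda_{k+1}+\cdots+\lambda_n)^{1/2}$. The target inequality then reads $\norm{f-f_{\hat{g}_N}}^2\leq C_{\ref{thm:expct_fg_f}}(1+N^{-1/2})^2 D^2$, so it suffices to establish $\norm{f-f_{\hat{g}_N}}\leq C_{\ref{thm:expct_fg_f}}^{1/2}(1+N^{-1/2})D$.

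First I would split $f-f_{\hat{g}_N}=(f-f_{\hat{g}})+(f_{\hat{g}}-f_{\hat{g}_N})$ and invoke Minkowski's inequality in $L^2(\Xset\times\Omega)$ to obtain $\norm{f-f_{\hat{g}_N}}\leq\norm{f-f_{\hat{g}}}+\norm{f_{\hat{g}}-f_{\hat{g}_N}}$. Before applying it I need to confirm that both summands lie in this product space: product measurability of $f_{\hat{g}_N}$ is the perturbed analogue of Lemma~\ref{lem:gN_meas}, which follows from the hatted version of Lemma~\ref{lem:Zy_pr_meas}, while $f$ and $f_{\hat{g}}$ are deterministic and hence trivially product measurable. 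Finiteness of the two norms is precisely what the two preceding theorems supply.

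Next I would bound the two pieces. Since $f$ and $f_{\hat{g}}$ do not depend on $\omega$, the first term collapses to $\norm{f-f_{\hat{g}}}^2=\expct{\X}{(f(\X)-f_{\hat{g}}(\X))^2}\leq C_{\ref{thm:expct_fg_f}}D^2$ by Theorem~\ref{thm:f_fghat}, so $\norm{f-f_{\hat{g}}}\leq C_{\ref{thm:expct_fg_f}}^{1/2}D$. For the second term, Theorem~\ref{thm:var_mc_pert} gives $\norm{f_{\hat{g}}-f_{\hat{g}_N}}^2\leq C_{\ref{thm:expct_fg_f}}N^{-1}D^2$, hence $\norm{f_{\hat{g}}-f_{\hat{g}_N}}\leq C_{\ref{thm:expct_fg_f}}^{1/2}N^{-1/2}D$. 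Adding the two bounds yields $\norm{f-f_{\hat{g}_N}}\leq C_{\ref{thm:expct_fg_f}}^{1/2}D(1+N^{-1/2})$, and squaring produces the assertion.

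I expect the proof to be essentially routine once the earlier theorems are in place; the only point that demands care is the measurability bookkeeping that lets me regard $\norm{\cdot}$ as a genuine $L^2$ norm on $\Xset\times\Omega$ so that Minkowski's inequality is legitimate. The ``hard part'' is therefore not analytic but organizational: transcribing the perturbed versions of Lemma~\ref{lem:Zy_pr_meas} and Lemma~\ref{lem:gN_meas}, which carry over verbatim with hats attached. This mirrors the triangle-inequality combination used in \cite[Theorem~3.6]{constantine2014active}, now phrased within the present probabilistic framework.
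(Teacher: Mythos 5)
Your proposal is correct and follows essentially the same route the paper intends: the paper states Theorem~\ref{thm:mse_f_fgN} by appeal to \cite[Theorem~3.6]{constantine2014active} (and its erratum), whose argument is exactly your Minkowski/triangle-inequality decomposition $f-f_{\hat{g}_N}=(f-f_{\hat{g}})+(f_{\hat{g}}-f_{\hat{g}_N})$ in $L^2(\Xset\times\Omega)$, combining Theorem~\ref{thm:f_fghat} and Theorem~\ref{thm:var_mc_pert} and then squaring. Your attention to product measurability (the hatted analogues of Lemma~\ref{lem:Zy_pr_meas} and Lemma~\ref{lem:gN_meas}) is precisely the bookkeeping the paper's probabilistic framework supplies, so nothing is missing.
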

The bound shows that the number of samples $N>0$ for the Monte Carlo approximation has little influence on the approximation quality of $f_{\hat{g}_N}$.
The eigenvalues corresponding to the inactive subspace are however, more crucial.

\subsection*{Numerical experiment}
To verify the previous statements numerically, we are going to lay down a simple example that is computationally cheap to analyze.
Let us consider a quadratic function of interest
\begin{equation}
	f : \Xset\to\R, \quad \x\mapsto\frac{1}{2}\x\tr\A\x
\end{equation}
in $n=10$ variables for a \textit{symmetric} matrix $\A\in\R^{n\times n}$.
In addition, we will assume a standard normal distribution on the domain of $f$, \ie $\Prob{\X}\defas\mathcal{N(\vec{0},\I)}$, and thus $\Xset=\R^n$.
To calculate the active subspace of this function, we need the gradient of $f$, which is
\begin{equation}
	\grad f(\x) = \A\x, \quad \x\in\Xset.
\end{equation}
Now, we can compute
\begin{align}
	\C &= \int_{\Xset}{\grad f(\x)\grad f(\x)\tr \px(\x)\dx} \\
	&= \A\left(\int_{\Xset}{\x\x\tr\px(\x)\dx}\right)\A\tr \\
	&= \A^2.
\end{align}
In order to get a good test example, let us choose
\begin{equation}
	\A \defas \W\MeigVals^{1/2}\W\tr,
\end{equation}
where $\W\in\R^{n\times n}$ is an arbitrary orthogonal matrix and $\MeigVals\in\R^{n\times n}$ a diagonal matrix containing descending eigenvalues having a spectral gap of almost two orders of magnitude after the second eigenvalue, \ie
\begin{align}
	\MeigVals &\defas \diag{10^4,10^{3.8},10^{2},10^{1.75},\dots,10^{0.25}} \\
	&= \begin{pmatrix}\MeigVals_1 & \\ & \MeigVals_2\end{pmatrix}.
\end{align}
The diagonal submatrices $\MeigVals_1\in\R^{k\times k}$, $k=2$, and $\MeigVals_2\in\R^{(n-k)\times(n-k)}$ contain eigenvalues from the active and inactive subspaces, respectively.
\begin{figure}
	\centering
	\includegraphics[scale=0.5]{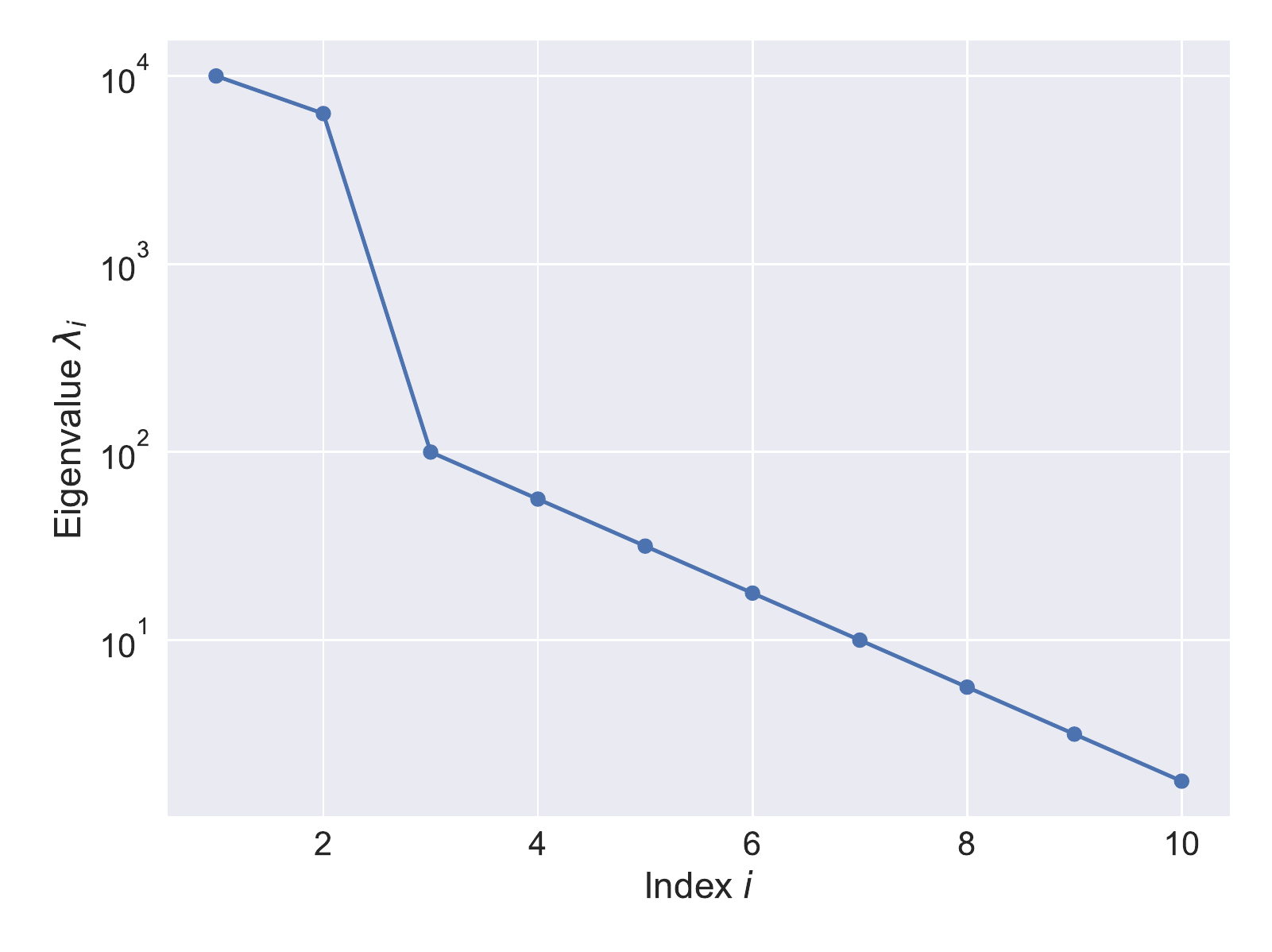}
	\caption{Decay of eigenvalues occurring in the numerical experiment.}
	\label{fig:eigvals_mse}
\end{figure}
The eigenvalues are plotted in Figure~\ref{fig:eigvals_mse}.
The function $f$, in terms of the active variable $\y\in\R^k$ and the inactive variable $\z\in\R^{n-k}$, can be computed explicitly as
\begin{align}
	f(\xyz) &= \frac{1}{2}\xyz\tr \A \,\xyz \\
	&= \frac{1}{2}(\y\tr\W_1\tr\W\MeigVals^{1/2}\W\tr\W_1\y + \z\tr\W_2\tr\W\MeigVals^{1/2}\W\tr\W_1\y \\
	& \qquad + \y\tr\W_1\tr\W\MeigVals^{1/2}\W\tr\W_2\z + \z\tr\W_2\tr\W\MeigVals^{1/2}\W\tr\W_2\z) \\
	&= \frac{1}{2}\left(\y\tr\begin{bmatrix}\I & \vec{0}\end{bmatrix}\MeigVals^{1/2}\begin{bmatrix}\I\\\vec{0}\end{bmatrix}\y + \z\tr\begin{bmatrix}\vec{0} & \I\end{bmatrix}\MeigVals^{1/2}\begin{bmatrix}\I\\\vec{0}\end{bmatrix}\y\right. \\
	& \qquad + \left.\y\tr\begin{bmatrix}\I & \vec{0}\end{bmatrix}\MeigVals^{1/2}\begin{bmatrix}\vec{0}\\\I\end{bmatrix}\z + \z\tr\begin{bmatrix}\vec{0} & \I\end{bmatrix}\MeigVals^{1/2}\begin{bmatrix}\vec{0}\\\I\end{bmatrix}\z\right) \\
	&= \frac{1}{2}\left(\y\tr\MeigVals_1^{1/2}\y + \z\tr\MeigVals_2^{1/2}\z\right).
\end{align}
It follows that $g$, defined in \eqref{eq:gy}, can be written as
\begin{align}
	g(\y) &= \frac{1}{2}\y\tr\MeigVals_1^{1/2}\y +  \frac{1}{2}\int_{\Zset}{\z\tr\MeigVals_2^{1/2}\z\;\pzy(\z|\y)\dz} \\
	&= \frac{1}{2}\y\tr\MeigVals_1^{1/2}\y + \frac{1}{2}\sum_{i=1}^{n-k}{\eigVal_{k+i}^{1/2}\int_{\Zset}{\z_i^2\;\pzy(\z|\y)\dz}} \\
	&= \frac{1}{2}\y\tr\MeigVals_1^{1/2}\y + \frac{1}{2}\trace{\MeigVals_2^{1/2}}
\end{align}
for $\y\in\Yset$.
The last line uses the fact that $\pzy$ is again a standard normal density since $\px$ is rotationally symmetric and $\x \mapsto (\y, \, \z)$ is an orthogonal mapping.
Note that $g(\y)$ depends only on eigenvalues corresponding to the active subspace if eigenvalues in $\MeigVals_2$ are all zero.
Similarly, the Monte Carlo approximation of $g$, $g_N$, defined in \eqref{eq:gNy}, is
\begin{equation}
	g_N(\y) = \frac{1}{2}\y\tr\MeigVals_1^{1/2}\y + \frac{1}{2N}\sum_{j=1}^{N}{\left(\Z^\y_j\right)\tr \MeigVals_2^{1/2} \Z^\y_j}
\end{equation}
for $\y\in\Yset$, where $\Z^\y_j\sim\Prob{\Z|\Y}^\y = \mathcal{N}(\vec{0},\I)$.
First, we would want to examine the convergence behavior of the mean squared error $\MSE_{f_g,f_{g_N}}\defas\expct{\X}{(f_g(\X)-f_{g_N}(\X,\cdot))^2}$ between $f_g$ and $f_{g_N}$ in the number of samples $N$ used for the approximation of $f_{g_N}$.
For fixed $N>0$, we will approximate the mean squared error by
\begin{equation}
	\MSE_{f_g,f_{g_N}} \approx \frac{1}{N_\x}\sum_{i=1}^{N_\x}{(f_g(\X_i)-f_{g_N}(\X_i,\cdot))^2},
\end{equation}
where $\X_i\sim\Prob{\X}$, $i\in\enum{N_\x}$, are $N_\x>0$ random values in the domain of $f$.
We can choose $N_\x=10^4$ to get a sufficiently accurate approximation.
Since the mean squared error is random, we can compute $N_\z=10^3$ realizations of it to approximate
\begin{equation}
	\expct{}{\MSE_{f_g,f_{g_N}}}
\end{equation}
which is the quantity we found a bound for in Theorem~\ref{thm:var_mc}.
Additionally, we can investigate the \textit{coefficient of variation}, denoted by $\mathbb{C}\textnormal{V}$, of $\MSE_{f_g,f_{g_N}}$ defined by
\begin{equation}
	\cv{}{\MSE_{f_g,f_{g_N}}} \defas \frac{\std{}{\MSE_{f_g,f_{g_N}}}}{\expct{}{\MSE_{f_g,f_{g_N}}}},
\end{equation}
where $\mathbb{S}\textnormal{td}$ denotes the standard deviation.
We will run the experiment for $N=2,5,10,20,50,100$ samples.
Same steps are follows when investigating the random variable $\MSE_{f,f_{g_N}}\defas\expct{\X}{(f(\X)-f_{g_N}(\X,\cdot))^2}$.
Theorem~\ref{thm:mse_f_fgN} provides an upper bound on its expectation value.
The computational results are plotted in Figure~\ref{fig:mse}.
These results verify the first order behavior in $N$ of $\mathbb{E}[\MSE_{f_g,f_{g_N}}]$ and show furthermore that the variation of the random variables $\MSE_{f,f_{g_N}}$ and $\MSE_{f_g,f_{g_N}}$ is nearly constant w.r.t. $N$.
This information is valuable since it means that the consequences of regarding $\MSE_{f,f_{g_N}}$ and $\MSE_{f_g,f_{g_N}}$ as deterministic are limited.
In addition, the left plot confirms the fact that an increasing number of samples has a decreasing effect on the (expectation of the) mean squared error between $f$ and $f_{g_N}$.
\begin{figure}
	\begin{subfigure}{0.49\textwidth}
		\centering
		\includegraphics[width=\textwidth]{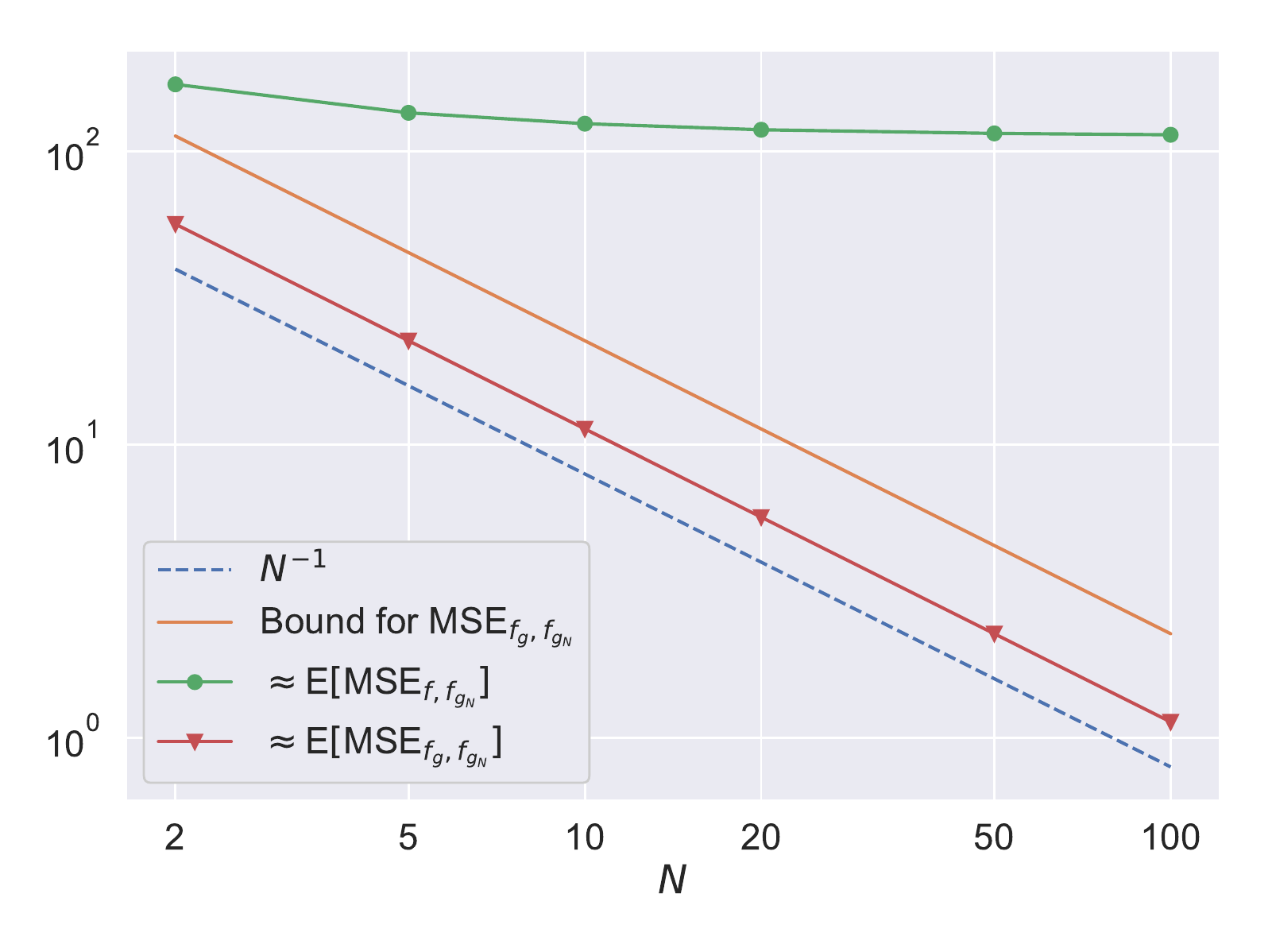}
	\end{subfigure}
	\begin{subfigure}{0.49\textwidth}
		\centering
		\includegraphics[width=\textwidth]{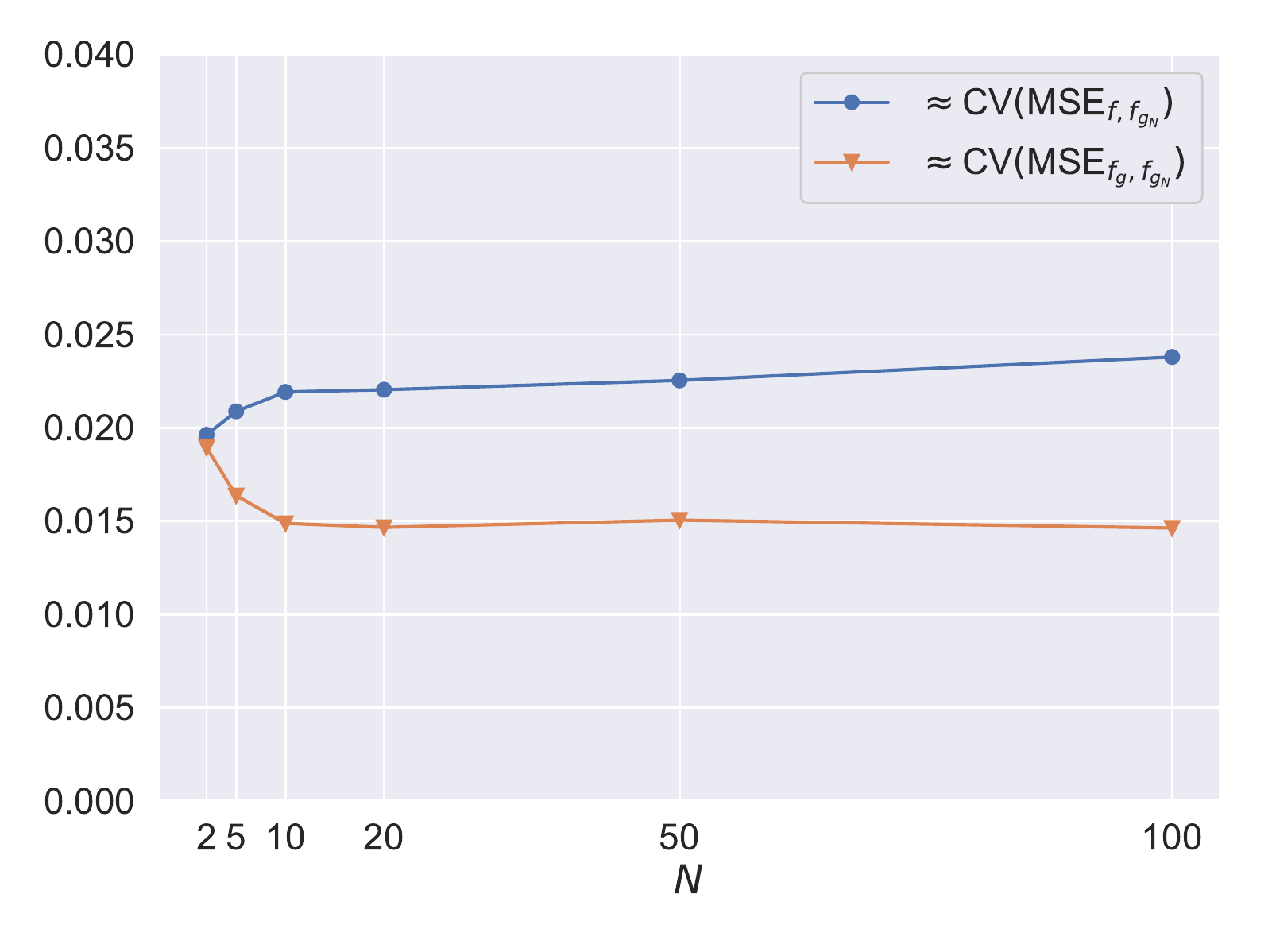}
	\end{subfigure}
%
	\caption{The left plot confirms the linear decay in $N$ of the expectation of the mean squared error $\mathbb{E}[\MSE_{f_g,f_{g_N}}]$.
			It also shows that increasing the number of samples $N$ does not have much effect since the eigenvalues from the inactive subspace are dominating.
			The randomness of $\MSE_{f_g,f_{g_N}}$ is illustrated in the right figure by showing that $\mathbb{C}\textnormal{V}(\MSE_{f_g,f_{g_N}})$ and $\mathbb{C}\textnormal{V}(\MSE_{f,f_{g_N}})$ are not zero.
			}
	\label{fig:mse}
\end{figure}

\section{Bayesian inversion in the active subspace}
\label{sec:bayinv_as}
In \cite{constantine2016accelerating}, it was possible to accelerate the mixing in a Metropolis-Hastings algorithm that produces samples of a posterior distribution from a Bayesian setting.
Let us first define the setup in the context of Bayesian inversion before we come to the more interesting and critical point relevant to the current framework.

In Bayesian inversion, one tries to infer parameters $\x\in\R^n$ of a model $\model:\Xset\to\R^{n_\data}$ in a statistical setting \cite{kaipio2006statistical}.
The theory is not restricted to the parameter and data space we use here for simplicity, but can be extended to a much more general setting \cite{stuart2010inverse}.
For example, the outcome of $\model$ can be the solution of a PDE applied to a linear functional called the \textit{Quantity of Interest} (QoI).
The parameters are regarded as random variables and are thus able to model uncertainty.
We begin by assuming a \textit{prior probability distribution}, induced by a density function $\prior$, on the space of the parameters.
The prior is updated to the \textit{posterior distribution} $\posterior$ by incorporating \textit{data} $\data\in\R^{n_\data}$ which is also treated as a random variable.
We then model the data by $\data=\model(\X)+\bm{\eta}$, where $\bm{\eta}\sim\mathcal{N}(0,\cov)$ is additive Gaussian noise modeling measurement error with a covariance matrix $\cov\in\R^{n_\data\times n_\data}$.
The update is formulated by \textit{Bayes' Theorem} which makes a statement about the conditional probability of $\X$ given $\data$.
That is,
\begin{equation}
	\label{eq:def_post}
	\posterior(\x|\data) \defas \frac{\like(\data|\x)\prior(\x)}{\int_{\R^n}{\like(\data|\x')\prior(\x')\dx'}}
\end{equation}
for $\x\in\Xset$.
The concrete expression of the likelihood $\like$ is determined by the model for the measurement error.
In our case, i.\,e. assuming additive Gaussian noise for the measurement error, it holds that for $\x\in\Xset$
\begin{equation}
	\like(\x|\data) \propto \exp\left(-\frac{1}{2}\norm{\data-\model(\x)}^2_\cov\right) \asdef \exp\left(-f_\data(\x)\right).
\end{equation}
The function $f_\data(\x)\defas\frac{1}{2}\norm{\data-\model(\x)}^2_\cov \defas \frac{1}{2}\norm{\cov^{-1/2}(\data-\model(\x))}^2$, $\x\in\Xset$, is called the \textit{data misfit function}.

\textit{Markov chain Monte Carlo} (MCMC) \cite{brooks2011handbook} methods are a well-known technique for interrogating the posterior distribution.
MCMC constructs a Markov chain such that its stationary distribution is the one we want to sample from, i.\,e. the posterior in this case.
One popular MCMC algorithm is the \textit{Metropolis-Hastings algorithm} \cite{brooks2011handbook} which is also used in \cite{constantine2016accelerating}.

Metropolis-Hastings can be computationally inefficient in high-dimensional parameter spaces.
One opportunity to increase efficiency that is presented in \cite{constantine2016accelerating} is dimension reduction by active subspaces.
That is, our function of interest $f$ from the active subspace context is chosen to be the data misfit function $f_\data$ from the Bayesian setting, \ie $f(\x)\defas f_\data(\x)$, $\x\in\Xset$.
Intuitively, the active subspace of $f_\data$ contains directions in the parameter space that are informed by data $\data$ very well.
The prior plays the role of the given density function used for weighting the gradients in \eqref{eq:C}, i.\,e. $\px \defas \prior$.
Hence, the posterior on the whole space is given by
\begin{equation}
	\posterior(\x) = \frac{\exp(-f(\x))\px(\x)}{Z}
\end{equation}
for $\x\in\Xset$, where $Z\defas\int_{\R^n}{\exp(-f(\x'))\px(\x')\dx'}$ is a normalizing constant necessary to get a proper probability density function with unit mass.
We can remove the conditioning on $\data$ explicitly to keep the notation clear.
Respective versions for approximate posteriors using approximations $f_g$ and $f_{g_N}$ are defined through
\begin{equation}
	\rho_{\textnormal{post},g}(\x) \defas \frac{\exp(-f_{g}(\x))\px(\x)}{Z_{g}} \quad\text{and}\quad \rho_{\textnormal{post},g_N}(\x,\cdot) \defas \frac{\exp(-f_{g_N}(\x,\cdot))\px(\x)}{Z_{g_N}(\cdot)}.
\end{equation}
for $\x\in\Xset$.

Consequently, we also want to regard results involving perturbed versions of the posterior which are defined by
\begin{equation}
\label{eq:post_gN_pert}
\rho_{\textnormal{post},\hat{g}}(\x) \defas \frac{\exp(-f_{\hat{g}}(\x))\px(\x)}{Z_{\hat{g}}} \quad\text{and}\quad \rho_{\textnormal{post},\hat{g}_N}(\x,\cdot) \defas \frac{\exp(-f_{\hat{g}_N}(\x,\cdot))\px(\x)}{Z_{\hat{g}_N}(\cdot)}
\end{equation}
for $\x\in\Xset$.
Note that $\rho_{\textnormal{post},g_N}(\x,\cdot)$ and $\rho_{\textnormal{post},\hat{g}_N}(\x,\cdot)$ are random variables for each $\x\in\Xset$, as well as the normalizing constants $Z_{g_N}(\cdot)$ and $Z_{\hat{g}_N}(\cdot)$.

The result that we want to restate here gives an upper bound on the (expected) Hellinger distance between the true posterior and its approximation via $\hat{g}_N$.
Let us investigate a bound involving the approximation with $\hat{g}$, \ie the perturbed version of $g$ but without randomness through the next theorem, which is taken from \cite[Theorem~3.1]{constantine2016accelerating}.
Its proof is attached in Appendix~\ref{app:proof_hell_post_gpert} for the sake of completeness and uses results from Section~\ref{sec:approx}.
\begin{theorem}
\label{thm:hell_post_gpert}
Under the assumptions of Theorem~\ref{thm:expct_fg_f}, it holds that
\begin{equation}
d_{\textnormal{H}}(\rho_{\textnormal{post}},\rho_{\textnormal{post},\hat{g}}) \leq \sqrt{C_{\ref{thm:expct_fg_f}}} L \left(\epsilon(\lambda_1+\cdots+\lambda_k)^{1/2} + (\lambda_{k+1}+\cdots+\lambda_n)^{1/2}\right),
\end{equation}
where 
\begin{equation}
L^2 \defas \frac{1}{8}\left(Z \exp\left(-\int_{\Xset}{f(\x)\px(\x)\dx}\right)\right)^{-1/2} > 0.
\end{equation}
\end{theorem}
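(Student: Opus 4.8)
The plan is to reduce the Hellinger distance to the mean-squared error $\expct{\X}{(f(\X)-f_{\hat{g}}(\X))^2}$ already controlled in Theorem~\ref{thm:f_fghat}, and then simply substitute that bound. I would start from the affinity form of the squared Hellinger distance: expanding $(\sqrt{\posterior}-\sqrt{\posteriorgpert})^2$, using that both densities integrate to one, and recognising the cross term, one obtains
\begin{equation}
	\dhell{\posterior}{\posteriorgpert}^2 = 1 - \frac{1}{\sqrt{Z Z_{\hat{g}}}}\int_\Xset e^{-(f(\x)+f_{\hat{g}}(\x))/2}\,\px(\x)\dx .
\end{equation}
Writing $A \defas \int_\Xset e^{-(f(\x)+f_{\hat{g}}(\x))/2}\px(\x)\dx$, it then suffices to bound $\sqrt{Z Z_{\hat{g}}} - A$ from above.

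The central estimate proceeds by the arithmetic--geometric mean inequality $\sqrt{Z Z_{\hat{g}}} \le (Z+Z_{\hat{g}})/2$, which after completing the square gives
\begin{equation}
	\sqrt{Z Z_{\hat{g}}} - A \le \frac{1}{2}\int_\Xset \left(e^{-f(\x)/2}-e^{-f_{\hat{g}}(\x)/2}\right)^2 \px(\x)\dx .
\end{equation}
Here I would invoke that $f=f_\data\ge 0$ and that, being a conditional expectation of the nonnegative $f$, also $f_{\hat{g}}\ge 0$; on $[0,\infty)$ the map $t\mapsto e^{-t/2}$ is $\tfrac12$-Lipschitz, so $(e^{-f/2}-e^{-f_{\hat{g}}/2})^2 \le \tfrac14 (f-f_{\hat{g}})^2$ pointwise. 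Combining the two displays yields the clean intermediate bound
\begin{equation}
	\dhell{\posterior}{\posteriorgpert}^2 \le \frac{1}{8\sqrt{Z Z_{\hat{g}}}}\,\expct{\X}{\left(f(\X)-f_{\hat{g}}(\X)\right)^2}.
\end{equation}

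To convert $\sqrt{Z Z_{\hat{g}}}$ into the stated constant $L$, I would lower-bound the (inaccessible) perturbed normaliser by Jensen's inequality, $Z_{\hat{g}}=\int_\Xset e^{-f_{\hat{g}}}\px \ge \exp(-\int_\Xset f_{\hat{g}}\px)$, and then use the tower property of conditional expectation, $\int_\Xset f_{\hat{g}}(\x)\px(\x)\dx = \expct{\X}{f(\X)} = \int_\Xset f(\x)\px(\x)\dx$, so that $Z_{\hat{g}}\ge \exp(-\int_\Xset f\px)$ independently of $\hat{\W}$. Hence $\sqrt{Z Z_{\hat{g}}}\ge (Z e^{-\int_\Xset f\px})^{1/2}$ and $\tfrac{1}{8\sqrt{Z Z_{\hat{g}}}}\le L^2$, giving $\dhell{\posterior}{\posteriorgpert}^2 \le L^2\,\expct{\X}{(f-f_{\hat{g}})^2}$. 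Inserting the bound of Theorem~\ref{thm:f_fghat} and taking square roots then produces the claimed estimate.

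I expect the only delicate points to be (i) the Lipschitz step, which rests entirely on the nonnegativity of both potentials and hence on $f$ genuinely being a data-misfit (a squared norm), and (ii) the tower-property identity $\int_\Xset f_{\hat{g}}\px = \int_\Xset f\px$ for the \emph{perturbed} conditional expectation, which is what renders the normaliser bound $\hat{\W}$-free and thus lets the constant depend only on $Z$ and $\int_\Xset f\px$. The remaining arithmetic--geometric-mean and Jensen manipulations are routine, and all measurability and integrability needed here are already secured by the standing assumptions of Section~\ref{sec:approx}.
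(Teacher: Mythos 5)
Your proposal is correct and is essentially the paper's own proof in a slightly reorganized form: your AM--GM step $\sqrt{Z Z_{\hat{g}}}\le (Z+Z_{\hat{g}})/2$ applied to the affinity form of the squared Hellinger distance is algebraically identical to the paper's step of dropping the nonnegative term $\bigl(Z^{1/2}-Z_{\hat{g}}^{1/2}\bigr)^2$ from its expansion, and both arrive at the same intermediate bound $\dhell{\posterior}{\posteriorgpert}^2 \le \tfrac{1}{8}\bigl(Z Z_{\hat{g}}\bigr)^{-1/2}\expct{\X}{\bigl(f(\X)-f_{\hat{g}}(\X)\bigr)^2}$. From there your finish --- the $\tfrac{1}{2}$-Lipschitz estimate for $t\mapsto e^{-t/2}$ on $[0,\infty)$, Jensen's inequality to lower-bound $Z_{\hat{g}}$, the tower property $\int_{\Xset}f_{\hat{g}}\,\px\dx=\int_{\Xset}f\,\px\dx$ rendering the constant $\hat{\W}$-free, and the invocation of Theorem~\ref{thm:f_fghat} --- coincides step by step with the proof in Appendix~\ref{app:proof_hell_post_gpert}.
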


Note that, contrary to \cite[Theorem~3.1]{constantine2016accelerating}, the Poincaré constant appears as a square root $\sqrt{C_{\ref{thm:expct_fg_f}}}$ (instead of $C_{\ref{thm:expct_fg_f}}$).
This is a similar result as in Theorem~\ref{thm:f_fghat} asserting that the Hellinger distance between the true posterior and the one approximated with $\hat{g}$ is dominated by the eigenvalues from the inactive subspace if the Euclidean norm of $\W-\hat{\W}$ is small.
That is, the smaller these eigenvalues are, the better is the approximation of the posterior with perturbed $\hat{g}$, on average.

For the approximation involving a random $\hat{g}_N$, note that the Hellinger distance $d_{\textnormal{H}}(\rho_{\textnormal{post},g},\rho_{\textnormal{post},g_N}(\cdot))$ is also a random variable, \ie we can, for example, make statements on its expectation $\mathbb{E}_\Z$.
A corresponding statement is given in the next theorem.
\begin{theorem}
\label{thm:hell_post_gpert_gpertN}
Under the assumptions of Theorem~\ref{thm:expct_fg_f}, It holds that
\begin{equation}
\expct{}{d_{\textnormal{H}}(\rho_{\textnormal{post},\hat{g}},\rho_{\textnormal{post},\hat{g}_N})} \leq \sqrt{\frac{C_{\ref{thm:expct_fg_f}}}{N}} L \left(\epsilon(\lambda_1+\cdots+\lambda_k)^{1/2}+(\lambda_{k+1}+\cdots+\lambda_n)^{1/2}\right), \label{eq:bound_pert_hell}
\end{equation}
where $L>0$ is the same constant as in Theorem~\ref{thm:hell_post_gpert}.
\end{theorem}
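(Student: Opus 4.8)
The plan is to reduce the statement to Theorem~\ref{thm:var_mc_pert} by way of the very Hellinger-to-$L^2$ estimate that already underlies Theorem~\ref{thm:hell_post_gpert}. The appendix proof of that theorem establishes, as its core step, a deterministic inequality of the shape $\dhell{\rho_{\textnormal{post},a}}{\rho_{\textnormal{post},b}} \leq L\,\norm{f_a-f_b}_{L^2(\px)}$ comparing two Gibbs-type posteriors built from potentials $f_a,f_b$ against the common reference density $\px$; there the constant $L$ arises from the Lipschitz bound $\abs{e^{-s/2}-e^{-t/2}}\leq\tfrac12\abs{s-t}$ combined with lower bounds on the normalizing constants. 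I would apply this inequality \emph{pathwise}, i.e.\ for each fixed $\omega\in\Omega$, to the pair $(f_{\hat{g}},f_{\hat{g}_N}(\cdot,\omega))$, obtaining
\begin{equation}
	\dhell{\posteriorgpert}{\posteriorgNpert(\cdot,\omega)} \leq L\,\norm{f_{\hat{g}}-f_{\hat{g}_N}(\cdot,\omega)}_{L^2(\px)}
\end{equation}
for $\Prob{}$-almost every $\omega$.

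First I would record that $\omega\mapsto\dhell{\posteriorgpert}{\posteriorgNpert(\cdot,\omega)}$ is a genuine random variable: this follows from the product measurability of $f_{\hat{g}_N}$ (the perturbed analogue of Lemma~\ref{lem:gN_meas}), hence of the random normalizing constant $Z_{\hat{g}_N}(\cdot)$, so that the integrand defining the Hellinger distance is jointly measurable and its $\x$-integral is $\mathcal{A}$-measurable. Taking $\expct{}{\cdot}$ in the pathwise bound and pulling the expectation inside the square root by Jensen's inequality (concavity of $\sqrt{\cdot}$) yields
\begin{align}
	\expct{}{\dhell{\posteriorgpert}{\posteriorgNpert}}
	&\leq L\,\expct{}{\sqrt{\expct{\X}{\left(f_{\hat{g}}(\X)-f_{\hat{g}_N}(\X,\cdot)\right)^2}}} \\
	&\leq L\,\sqrt{\expct{}{\expct{\X}{\left(f_{\hat{g}}(\X)-f_{\hat{g}_N}(\X,\cdot)\right)^2}}}.
\end{align}
The quantity under the root is precisely the left-hand side of Theorem~\ref{thm:var_mc_pert}; substituting its bound $\tfrac{C_{\ref{thm:expct_fg_f}}}{N}\left(\epsilon(\lambda_1+\cdots+\lambda_k)^{1/2}+(\lambda_{k+1}+\cdots+\lambda_n)^{1/2}\right)^2$ and extracting the square root gives exactly \eqref{eq:bound_pert_hell}.

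The step I expect to be the main obstacle is justifying that the constant here is literally the same $L$ as in Theorem~\ref{thm:hell_post_gpert}, rather than one depending on $Z_{\hat{g}}$ and on the \emph{random} $Z_{\hat{g}_N}(\omega)$. I would handle this by verifying that the normalizing-constant lower bounds used in the appendix are uniform across the potentials in play: since the misfit $f$ is nonnegative, each of $f_{\hat{g}}$ and $f_{\hat{g}_N}(\cdot,\omega)$ is a conditional expectation, respectively a Monte Carlo average, of nonnegative values and hence nonnegative, so Jensen's inequality controls $Z_{\hat{g}}$ and $Z_{\hat{g}_N}(\omega)$ from below. The delicate point is that the Jensen lower bound for $Z_{\hat{g}_N}(\omega)$ involves the random quantity $\int f_{\hat{g}_N}(\cdot,\omega)\,\px\,d\x$, so to keep $L$ deterministic one must show this lower bound does not degrade $\Prob{}$-a.s.\ (e.g.\ from the boundedness of $\px$ and of the misfit on the relevant support, or a mild uniform integrability argument), ensuring the null set where the pathwise bound could fail is irrelevant to the expectation. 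Once this uniformity is secured, $L$ may be pulled outside $\expct{}{\cdot}$ and the calculation above goes through verbatim.
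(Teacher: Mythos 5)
Your skeleton is the same as the paper's: bound the Hellinger distance pathwise by the $L^2(\px)$-distance of the potentials, exchange expectation and square root, and invoke Theorem~\ref{thm:var_mc_pert}; your measurability remarks and the Jensen step $\expct{}{\sqrt{X}}\leq\sqrt{\expct{}{X}}$ are fine. The genuine gap is exactly the step you flag and then defer: the pathwise inequality with the \emph{deterministic} constant $L$. What the appendix argument gives for fixed $\omega$ is
\begin{equation}
\dhell{\posteriorgpert}{\posteriorgNpert(\cdot,\omega)}^2 \leq \frac{1}{8}\left(Z_{\hat{g}}\,Z_{\hat{g}_N}(\omega)\right)^{-1/2}\expct{\X}{\left(f_{\hat{g}}(\X)-f_{\hat{g}_N}(\X,\omega)\right)^2},
\end{equation}
and to replace this prefactor by $L^2=\frac{1}{8}\left(Z\exp\left(-\int_{\Xset}f(\x)\px(\x)\dx\right)\right)^{-1/2}$ you would need the almost sure bound $Z_{\hat{g}}\,Z_{\hat{g}_N}(\omega)\geq Z\exp\left(-\int_{\Xset}f(\x)\px(\x)\dx\right)$, \ie a deterministic positive lower bound on the random normalizing constant $Z_{\hat{g}_N}(\omega)$. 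Jensen only yields the \emph{random} bound $Z_{\hat{g}_N}(\omega)\geq\exp\left(-S_N(\omega)\right)$ with $S_N(\omega)\defas\int_{\Xset}f_{\hat{g}_N}(\x,\omega)\,\px(\x)\dx$, and $S_N$ is a Monte Carlo-type average whose mean is $\int_{\Xset}f\,\px\dx$; it exceeds that mean with positive probability whenever it is nondegenerate (in the paper's own quadratic/Gaussian example this happens on an event of probability roughly $1/2$), so the required a.s.\ bound fails. Neither of your proposed repairs closes this: boundedness of the misfit is not among the assumptions of Theorem~\ref{thm:expct_fg_f} (in case (2) the domain is $\R^n$ and $f$ is typically unbounded), and uniform integrability controls expectations, never almost sure bounds.

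The paper's proof avoids the issue by never making the constant deterministic pathwise: it keeps the random factor $Z_{\hat{g}_N}^{-1/2}$ inside the expectation and decouples it from the squared error with the Cauchy--Schwarz inequality,
\begin{equation}
\expct{}{\dhell{\posteriorgpert}{\posteriorgNpert}}^2 \leq \frac{1}{8}\,Z_{\hat{g}}^{-1/2}\,\expct{}{Z_{\hat{g}_N}^{-1/2}}\;\expct{}{\expct{\X}{\left(f_{\hat{g}}(\X)-f_{\hat{g}_N}(\X,\cdot)\right)^2}},
\end{equation}
then applies Theorem~\ref{thm:var_mc_pert} to the last factor, so that $Z_{\hat{g}_N}$ enters only through the moment $\expct{}{Z_{\hat{g}_N}^{-1/2}}$, which is subsequently related to $\expct{}{Z_{\hat{g}_N}}\geq\exp\left(-\int_{\Xset}f\,\px\dx\right)$ using Fubini via Lemma~\ref{lem:gN_meas}. (That final passage from $\expct{}{Z_{\hat{g}_N}}$ to $\expct{}{Z_{\hat{g}_N}^{-1/2}}$ is itself delicate, since Jensen points the wrong way there; but the structural point stands.) Replacing your pathwise deterministic-$L$ step by this Cauchy--Schwarz decoupling is the missing idea; with it, the remainder of your argument coincides with the paper's.
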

\begin{proof}
The proof is similar to the one in Theorem~\ref{thm:hell_post_gpert}.
The main difference is the usage of the Cauchy--Schwarz inequality and Theorem~\ref{thm:var_mc_pert} in the last step.
Specifically, the last step is
\begin{align}
\expct{}{\dhell{\posteriorgpert}{\posteriorgNpert}}^2 &\leq \frac{1}{8} Z_{\hat{g}}^{-1/2} \, \expct{}{\left(Z_{\hat{g}_N}^{-1/2} \, \expct{\X}{(f_{\hat{g}}(\X)-f_{\hat{g}_N}(\X,\cdot))^2}\right)^{1/2}}^2 \\
&\leq \frac{1}{8} Z_{\hat{g}}^{-1/2} \, \expct{}{Z_{\hat{g}_N}^{-1/2}} \expct{}{\expct{\X}{(f_{\hat{g}}(\X)-f_{\hat{g}_N}(\X,\cdot))^2}} \\
&\leq \frac{C_{\ref{thm:expct_fg_f}}}{N} L_{\hat{g},\hat{g}_N}^2 \left(\epsilon(\lambda_1+\cdots+\lambda_k)^{1/2}+(\lambda_{k+1}+\cdots+\lambda_n)^{1/2}\right)^2,
\end{align}
where 
\begin{equation}
L_{\hat{g},\hat{g}_N}^2 \defas \frac{1}{8} Z_{\hat{g}}^{-1/2} \expct{}{Z_{\hat{g}_N}^{-1/2}} > 0.
\end{equation}
The result follows by noting that
\begin{align}
	\expct{}{Z_{\hat{g}_N}} &\geq \exp\left(-\int_{\hat{\Yset}}{\expct{}{\hat{g}_N(\ypert,\cdot)} \pypert(\ypert)\dint{\ypert}}\right) \label{eq:lb_ZgN} \\
	&= \exp\left(-\int_{\hat{\Yset}}{\hat{g}(\ypert) \, \pypert(\ypert)\dint{\ypert}}\right) \\
	&= \exp\left(-\int_{\Xset}{f(\x) \, \px(\x)\dx}\right).
\end{align}
In \eqref{eq:lb_ZgN}, we changed integrals based on the result of Lemma~\ref{lem:gN_meas} (for perturbed quantities).
\end{proof}

According to Theorem~3.1 in \cite{constantine2016accelerating}, we can find an upper bound on the expectation of the Hellinger distance between the true posterior and $\rho_{\textnormal{post},\hat{g}_N}$ using the triangle equality.
\begin{theorem}
\label{thm:hell_truepost_mc_pert}
Under the assumptions of Theorem~\ref{thm:expct_fg_f}, it holds that
\begin{equation}
	\expct{}{d_{\textnormal{H}}(\rho_{\textnormal{post}},\rho_{\textnormal{post},\hat{g}_N})} \leq \sqrt{C_{\ref{thm:expct_fg_f}}} L \left(1+N^{-1/2}\right)\left(\epsilon(\lambda_1+\cdots+\lambda_k)^{1/2}+(\lambda_{k+1}+\cdots+\lambda_n)^{1/2}\right),
\end{equation}
where $L>0$ is the same constant as in Theorem~\ref{thm:hell_post_gpert}.
\end{theorem}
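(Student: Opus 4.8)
The plan is to obtain the bound from the triangle inequality for the Hellinger distance together with the two preceding theorems, splitting the total approximation error into a deterministic perturbation part and a random Monte Carlo part. Since $\dhell{\cdot}{\cdot}$ is a genuine metric on probability measures, I would first record, pointwise in the Monte Carlo randomness, the estimate
\begin{equation}
	\dhell{\posterior}{\posteriorgNpert} \leq \dhell{\posterior}{\posteriorgpert} + \dhell{\posteriorgpert}{\posteriorgNpert}.
\end{equation}
The intermediate measure $\posteriorgpert$ is chosen precisely because it separates the error caused by the subspace perturbation (controlled by Theorem~\ref{thm:hell_post_gpert}) from the error caused by the finite Monte Carlo sum (controlled by Theorem~\ref{thm:hell_post_gpert_gpertN}).

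Next I would take expectations $\expct{}{\cdot}$ over the Monte Carlo samples and use linearity to obtain
\begin{equation}
	\expct{}{\dhell{\posterior}{\posteriorgNpert}} \leq \expct{}{\dhell{\posterior}{\posteriorgpert}} + \expct{}{\dhell{\posteriorgpert}{\posteriorgNpert}}.
\end{equation}
The first summand on the right is in fact deterministic, because $\posteriorgpert$ in \eqref{eq:post_gN_pert} carries no Monte Carlo randomness, so its expectation equals $\dhell{\posterior}{\posteriorgpert}$ and is bounded by Theorem~\ref{thm:hell_post_gpert}. The second summand is bounded directly by the expected-distance estimate of Theorem~\ref{thm:hell_post_gpert_gpertN}. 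Both bounds share the common factor $\sqrt{C_{\ref{thm:expct_fg_f}}}\,L\,\left(\epsilon(\lambda_1+\cdots+\lambda_k)^{1/2}+(\lambda_{k+1}+\cdots+\lambda_n)^{1/2}\right)$, with the perturbation term contributing the prefactor $1$ and the Monte Carlo term contributing the prefactor $N^{-1/2}$.

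Combining the two bounds and factoring out this common quantity then produces the factor $\left(1+N^{-1/2}\right)$ and hence the claimed estimate. I do not anticipate a serious technical obstacle here: the argument is a clean triangle-inequality decomposition, and the only points that require care are confirming that $\dhell{\cdot}{\cdot}$ satisfies the triangle inequality (a standard property, as it is induced by the $L^2$ metric on square-root densities) and verifying that $\dhell{\posterior}{\posteriorgpert}$ genuinely carries no randomness so that its expectation is trivial. The quantitative content of the theorem is entirely inherited from Theorems~\ref{thm:hell_post_gpert} and~\ref{thm:hell_post_gpert_gpertN}, which in turn rest on the mean-squared-error bounds of Section~\ref{sec:approx}.
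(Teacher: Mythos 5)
Your proposal is correct and matches the paper's own (implicit) argument exactly: the paper derives this bound via the triangle inequality for the Hellinger distance through the intermediate measure $\posteriorgpert$, bounding the deterministic perturbation term by Theorem~\ref{thm:hell_post_gpert} and the expected Monte Carlo term by Theorem~\ref{thm:hell_post_gpert_gpertN}, then factoring out the common quantity to obtain the prefactor $\left(1+N^{-1/2}\right)$. Nothing is missing from your decomposition.
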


Similar to Theorem~\ref{thm:mse_f_fgN}, increasing the number of samples $N$ to gain accuracy will not have a large effect if the eigenvalues of the inactive subspace are too large and hence dominating.

\section{Summary}
\label{sec:summary}
This manuscript proposed a comprehensive probabilistic setting for approximating functions in active subspaces.
This was necessary to show that a certain expression for the mean squared error of a conditional expectation and its Monte Carlo approximation is a random term; thus, suggesting extensions of the analyses in \cite{constantine2014active,constantine2016accelerating} to a truly probabilistic setting.

Section~\ref{sec:prob_form} formulated the problem in general and motivates the reason for subsequent discussions.
Section~\ref{sec:as} introduced the notion of an active subspace and proved fundamental lemmas required for rigorous reasoning on latter details.
Section~\ref{sec:approx} defined the conditional expectation of a function of interest $f$ over the inactive subspace and used it as its approximation.
The randomness of the mean squared error between the conditional expectation and its Monte Carlo approximation brought us to extend results from \cite{constantine2014active}.
The results were verified numerically through a simple test example.
Figures also supported the presence of randomness and displayed the statistical properties, \eg expectations and coefficients of variation, of random terms.
Lastly, Section~\ref{sec:bayinv_as} discussed the applications of theorems from Section~\ref{sec:approx} to restate results from \cite{constantine2016accelerating} in the context of Bayesian inversion.
Within this context, the Hellinger distance of an exact Bayesian posterior distribution and its approximation using active subspaces is bounded from above by eigenvalues from the inactive subspace.
Since this expression, using a Monte Carlo approximation, is also random, the previous results were utilized to confirm a similar bound from \cite{constantine2016accelerating}.
\section*{Acknowledgments}
The author would like to acknowledge the assistance of the following people in the TUM community particularly: the significant help and constructive advice of Prof.~Barbara Wohlmuth, the support of the Chair for Numerical Mathematics, and the contributions of David Criens and Dominik Schmid concerning questions on measurable functions and measurable sets.

Furthermore, an important remark of Olivier Zahm (INRIA) on Poincaré and logarithmic Sobolev inequalities is gratefully acknowledged.

\appendix
\section{Proof of Theorem~\ref{thm:hell_post_gpert}}
\label{app:proof_hell_post_gpert}
\begin{proof}
Repeating the steps from \cite[Theorem 3.1]{constantine2016accelerating} gives
\begin{align}
d_\textnormal{H}&\left(\rho_{\textnormal{post}},\rho_{\textnormal{post},\hat{g}}\right)^2 = \frac{1}{2} \int_{\Xset}{\left[\rho_{\text{post}}(\x)^{1/2} - \rho_{\text{post},\hat{g}}(\x)^{1/2}\right]^2 \dx} \\
&= \frac{1}{2} \int_{\Xset}{\left[\left(\frac{\exp(-f(\x))\px(\x)}{Z_g}\right)^{1/2} - \left(\frac{\exp(-f_{\hat{g}}(\x))\px(\x)}{Z_{\hat{g}}}\right)^{1/2}\right]^2 \dx} \\
&= \frac{1}{2} \int_{\Xset}{\left[\left(\frac{\exp(-f(\x))}{Z}\right)^{1/2} - \left(\frac{\exp(-f_{\hat{g}}(\x))}{Z_{\hat{g}}}\right)^{1/2}\right]^2 \px(\x) \dx} \\
&= \frac{1}{2\left(ZZ_{\hat{g}}\right)^{1/2}} \left[\int_{\Xset}{\left(\exp(-f(\x))^{1/2} - \exp(-f_{\hat{g}}(\x))^{1/2}\right)^2\px(\x)\dx}\right. \\
&\hspace{4cm}\left. - \left(Z^{1/2} - Z_{\hat{g}}^{1/2}\right)^2 \right] \nonumber \\
&\leq \frac{1}{2\left(ZZ_{\hat{g}}\right)^{1/2}} \int_{\Xset}{\left[\exp(-f(\x))^{1/2} - \exp(-f_{\hat{g}}(\x))^{1/2}\right]^2\px(\x)\dx} \\
&= \frac{1}{2\left(ZZ_{\hat{g}}\right)^{1/2}} \int_{\Xset}{\left[\exp\left(-\frac{f(\x)}{2}\right) - \exp\left(-\frac{f_{\hat{g}}(\x)}{2}\right)\right]^2\px(\x)\dx} \\
&\leq \frac{1}{2\left(ZZ_{\hat{g}}\right)^{1/2}} \int_{\Xset}{\left(\frac{1}{2}\left(f(\x) - f_{\hat{g}}(\x)\right)\right)^2 \px(\x) \dx} \\
&\leq \frac{1}{8\left(ZZ_{\hat{g}}\right)^{1/2}} \int_{\Xset}{\left(f(\x)-f_{\hat{g}}(\x)\right)^2\px(\x) \dx} \\
&= \frac{1}{8\left(ZZ_{\hat{g}}\right)^{1/2}} \expct{\X}{\left(f(\X)-f_{\hat{g}}(\X)\right)^2} \\
&\leq C_{\ref{thm:expct_fg_f}} L_{f,\hat{g}}^2 \left(\epsilon(\lambda_1+\cdots+\lambda_k)^{1/2} + (\lambda_{k+1}+\cdots+\lambda_n)^{1/2}\right)^2, \label{eq:expct_fg_fgN}
\end{align}
where
\begin{equation}
L_{f,\hat{g}}^2 \defas \frac{1}{8}\left(ZZ_{\hat{g}}\right)^{-1/2}.
\end{equation}
The last equation in \eqref{eq:expct_fg_fgN} uses the result of Theorem~\ref{thm:f_fghat}.
The result follows by noting that
\begin{align}
	Z_{\hat{g}} &\geq \exp\left(-\int_{\Xset}{f_{\hat{g}}(\x) \, \px(\x)\dx}\right) \\
	&= \exp\left(-\int_{\hat{\Yset}}{\hat{g}(\ypert) \, \pypert(\ypert)\dint{\ypert}}\right) \\
	&= \exp\left(-\int_{\hat{\Yset}}\left(\int_{\hat{\Zset}}{f(\xyzW{\ypert}{\zpert}{\Wpert}) \, \pzypert(\zpert|\ypert)\dint{\zpert}}\right) \pypert(\ypert) \dint{\ypert}\right) \\
	&= \exp\left(-\int_{\Xset}{f(\x) \, \px(\x)\dx}\right).
\end{align}
\end{proof}
%

\end{document}